\theoremstyle{plain}
\theoremstyle{definition}
\newtheorem{theorem}{Theorem}
\newtheorem{lemma}{Lemma}
\newtheorem{corollary}{Corollary}
\newtheorem{proposition}{Proposition}
\DeclareMathOperator{\diag}{diag}
\DeclareMathOperator{\argmin}{argmin}
\DeclareMathOperator{\inte}{int}
\newcommand{\Sp}{\mathbb S^n}
\newcommand{\R}{\mathbb R}
\newcommand{\lng}{\left\langle}
\newcommand{\rng}{\right\rangle}
\newcommand{\lf}{\left}
\newcommand{\rg}{\right}
\newcommand{\ds}{\displaystyle}
\newcommand{\f}{\frac}
\newcommand{\tp}{^\top}
\newcommand{\prp}{^\perp}
\newcommand{\inv}{^{-1}}
\newcommand{\lm}{\lambda_{\min}}
\newcommand{\vo}{1}
\def\DHLhks#1#2{\setbox0=\hbox{$#1\sqrt#2$}\dimen0=\ht0
\advance\dimen0-0.2\ht0
\setbox2=\hbox{\vrule height\ht0 depth -\dimen0}%
{\box0\lower0.4pt\box2}}
\date{}
\begin{document}
\sffamily
%%%%%%%%%%%%%%%%%%%%%%%%%%%%%%%%%%%%%%%%%%%%%%%%%%%%%%%%%%%%%%%%%%%%%%%%%%%%%%%%%%%%%%%%%%%%%%%%%%%%%%%%%%%%%%
\title{Why Study Spherical Convexity of Non-Homogeneous Quadratic Functions? What Makes It Surprising?}
\author{R. Bolton \thanks{School of Mathematics, University of Birmingham, Watson Building, Edgbaston, Birmingham - B15 2TT, United Kingdom
	(E-mail: {\tt ryanbolton@hotmail.co.uk})}
\and
S. Z. N\'emeth \thanks{School of Mathematics, University of Birmingham, Watson Building, Edgbaston, Birmingham - B15 2TT, United Kingdom
(E-mail: {\tt s.nemeth@bham.ac.uk})}
}
\maketitle
\begin{abstract}
	This paper establishes necessary, sufficient, and equivalent conditions for the 
	spherical convexity of non-homogeneous quadratic functions. By examining criteria for 
	determining spherical convexity, we identified unique properties that differentiate 
	spherically convex quadratic functions from their geodesically convex counterparts in 
	both hyperbolic and Euclidean spaces. Since spherically convex functions over the 
	entire sphere are constant, our analysis focuses on proper spherically convex subsets 
	of the sphere. Our primary results concern non-homogeneous quadratic functions on the 
	spherically convex set of unit vectors with positive coordinates. We also extend our 
	findings to more general spherically convex sets. Additionally, the paper explores 
	special cases of non-homogeneous quadratic functions where the defining matrix is of a 
	specific type, such as positive, diagonal, or a Z-matrix. This study not only provides 
	useful criteria for spherical convexity but also reveals surprising characteristics of 
	spherically convex quadratic functions, contributing to a deeper understanding of 
	convexity in spherical geometries.
\end{abstract}

\section{Introduction}

This study was prompted by a reviewer of the paper \cite{FerreiraNemethZhu2023}, who suggested analyzing the geodesic convexity of non-homogeneous quadratic functions on the sphere as a natural counterpart to the same problem on a hyperbolic manifold.

Initially, we anticipated implementing similar methodologies to those used in the aforementioned paper. However, we found that the approaches employed in our study needed to diverge significantly from those applied to the analogous problem on the hyperbolic manifold.

This disparity arises from our examination of the geodesic convexity of
non-homogeneous convex functions across the entire hyperbolic manifold, a problem
that becomes trivial on the sphere. Indeed, such a task presents no difficulty
on the sphere due to all its geodesics being closed, rendering
geodesically convex functions on the sphere constant. Consequently, we had
to confine our investigation to geodesically convex subsets of the sphere, which
we termed "spherically convex". As a result, the problem became significantly
more challenging since linear coordinate transformations, akin to those employed
in \cite{FerreiraNemethZhu2023}, failed to leave the considered spherically
convex set invariant.
The most intuitive spherically convex set is the collection of points on the sphere 
with nonnegative coordinates, i.e., the intersection of the sphere with the 
nonnegative orthant. While much of our study has focused on the interior of this set,
we have also derived results for a general spherically convex set, defined as the 
intersection of the sphere with a pointed convex cone. 

Motivations for studying the geodesic convexity of non-homogeneous spherically convex quadratic functions
are outlined below:

\begin{enumerate}
    \item Generally, the utility of exploring geodesic convexity mirrors that of Euclidean convexity.
\begin{enumerate}
    \item A local minimizer of a geodesic convex function is also a global minimizer, and strict geodesic convexity ensures a unique minimizer.
    \item In general optimization algorithms on Riemannian manifolds exhibit better convergence properties and superior performance when optimizing geodesically convex functions.
\end{enumerate}
\item   Numerous research papers and books (e.g.,
	\cite{Rubio2022,Ferreira2008,FerreiraLouzeiroPrudente2019,FerreiraLouzeiroPrudente2019b,Nguyen2019,Giesl2023,Rapcsak1997,Udriste1994,Wiesel2012,Boumal2020,Melvin2022,Maass2022,Yi2021,Burgisser2019,Sra2015,Allen-Zhu2018}) have leveraged the concept of geodesic convexity of functions on manifolds to establish various optimization results and analyze algorithmic convergence. 
	% Many of these works (e.g., cite???) adopt a broad framework to circumvent the challenge of definitively determining whether a function is geodesically convex or not. However, it is imperative to exercise caution when employing theories based on geodesic convexity without a comprehensive understanding of its nuances.

It is imperative to exercise caution when employing theories based on geodesic convexity without a comprehensive understanding of its nuances.

For instance, several publications in reputable journals have made incorrect assumptions regarding geodesic convexity on Hadamard manifolds, erroneously extending conditions that hold only under zero sectional curvature, as detailed in \cite{KristalyLiLopez-AcedoNicolae2016}. Determining the geodesic convexity of a function on a manifold often presents formidable challenges, even for seemingly straightforward cases involving simple functions on manifolds of constant nonzero curvature, such as quadratic functions.

The seemingly basic task of discerning whether a quadratic function on a manifold of constant curvature is geodesically convex highlights the complexity inherent in this endeavor. With the burgeoning interest in manifold optimization, it is paramount to establish a robust foundation. If the research community in manifold optimization struggles to address fundamental inquiries, such as the geodesic convexity of elementary functions on basic manifolds, it raises significant concerns about the field's credibility.
\item Several important problems and applications are related to minimizing
	non-homogeneous quadratic functions on the sphere (see
		\cite{Sorensen1997,Hager2001} 
		together with the corresponding references
		\cite{ByrdSchnabelSchultz1987,Gander1980,GolubGeneMatt1991,More1983,MoreSorensen1983,Sorensen1992,Byrd1987,CellisDennisTapia1985,HagerKrylyuk1999,Menke2012,PowelYuan1990,Tarantola1987} 
listed in their introduction). These problems are related
to trust region algorithms, regularization methods for ill-posed problems, and
seismic inversion problems, as detailed in \cite{Sorensen1997,Hager2001}. The convergence and performance of possible algorithms addressing these problems can profit from the geodesic convexity displayed by such functions.
\end{enumerate}

While our study may appear peripheral in the domain of manifold optimization, the aforementioned motivations underscore the indispensable significance of this topic.
\medskip

For clarity, we refer to the intersection of the positive orthant with the sphere as the "spherical positive orthant." The key findings of the paper are summarized as follows:

\begin{enumerate}
    \item An affine function on a spherically convex set, defined by a proper cone, is spherically convex if and only if its linear part is defined by a vector in the cone's polar.
    \item While our previous work (Ferreira \& Nemeth, 2019) demonstrated that
	    all  convex, homogeneous quadratic functions on the spherical
	    positive orthant are constant, the class of non-homogeneous quadratic functions on this domain is surprisingly vast.
    \item Contrary to the case in hyperbolic space, where any non-homogeneous
	    geodesically convex function has a geodesically convex homogeneous
	    quadratic part, this property doesn't extend to the spherical
	    nonnegative orthant. Moreover, for any symmetric matrix
	    $A\in\R^{n\times n}$, there exist infinitely many non-homogeneous
	    spherically convex quadratic functions with $A$ as the matrix of their homogeneous quadratic part. The spherical convexity of these functions depends on the size of the components of the vector $b$ defining their first-order homogeneous component. The largest component of $b$ is unique if the domain of the spherically convex function is the positive orthant, being the component with the smallest modulus, while the other components of $b$ are negative.
    \item The function described in item 3 can exhibit spherically convex behavior even with a positive largest component of $b$, as demonstrated in our work. Determining the smallest upper bounds of the components of $b$ for spherically convex behavior poses a challenging question, possibly involving unknown relationships between these components as part of necessary and sufficient conditions for spherical convexity.
\end{enumerate}

The paper is structured as follows:
\begin{itemize}
	\item[-] In Section 2, we establish the general terminology used throughout the paper.
	\item[-] In Section 3, we provide a brief overview of the main concepts related to spherical convexity and revisit a crucial result concerning the spherical convexity of smooth functions, which plays a paramount role in our study.
	\item[-] Section 4 presents necessary, sufficient, and equivalent conditions for the spherical convexity of non-homogeneous quadratic functions. We specifically emphasize functions defined on the spherically convex set of unit vectors with positive coordinates, as well as functions corresponding to special classes of matrices—namely, positive matrices, Z-matrices, and diagonal matrices.
	\item[-] Finally, in Section 5, we offer concluding remarks and propose challenging questions for future exploration.
\end{itemize}

\section{Basic terminology}

Let $n\ge 3$ be an integer and $\R^n\equiv\R^{n\times 1}$ be the vector space whose elements are column vectors of length $n$ 
and real entries. 

Denote by $I\in\R^{n\times n}$ the identity matrix, that is, the matrix with elements $I_{ij}=\delta_{ij}$, where $\delta$ is the Kronecker
symbol.

For any positive integer $k$ denote $[k]=\{1,\dots,k\}$. For all $i\in [n]$ denote by 
$e^i$ the column vector with all entries $0$ except 
the $i$-th entry which is $1$. The basis $\{e^1,\dots,e^n\}$ of $\R^n$ is called
the standard basis. Denote $\vo=e^1+\dots+e^n$. For any vector $z\in\R^n$ let 
$z_i=z\tp e^i$, for all $i\in [n]$. 

For any vector $d\in\R^n$ 
and symmetric matrix $A\in\R^{n\times n}$ denote 
$\diag(d):=\sum_{i=1}^nd_ie^i(e^i)\tp\in\R^{n\times n}$, 
$\diag(A)=\sum_{i=1}^n\lng Ae^i,e^i\rng e^i\in\R^n$, $\diag^2(A)=\diag(\diag(A))\in\R^{n\times n}$ and
$a_{ij}=\lng Ae^i,e^j\rng$ for all $i,j\in [n]$. Let $I:=\sum_{i=1}^n(e^i)(e^i)\tp$.

For any 
$x=(x_1,\dots,x_n)\tp$ and $y=(y_1,\dots,y_n)\tp$ in $\R^n$ denote $\lng x,y\rng:=x\tp y$ the canonical inner product. 

For any number $\alpha\in\R$ denote $\alpha_+=\max(\alpha,0)$, $\alpha_-=\max(-\alpha,0)$;
for any vector $v=v_1e^i+\dots+v_ne^n=(v_1,\dots,v_n)\tp\in\R^n$ denote  
\[v_+=\lf(v_1^+,\dots,v_n^+\rg)\tp\in\R^n_+,\quad v_-=\lf(v_1^-,\dots,v_n^-\rg)\tp\in\R^n_+;\] and for 
any matrix 
$B\in\R^{k\times l}$ denote $B=(b_{ij})_{i\in [k],j\in [l]}$. 

Let \[\Sp=\lf\{x\in\R^n:\|x\|=1\rg\}\] 
be the unit sphere, which we will shortly call just ``sphere''. 

With a slight abuse of notation we will use the same notation for the linear mappings of $\R^n$ and their matrices with respect
to the standard basis.

A set $K\subseteq\R^n$ is called \emph{cone} if $\lambda x\in K$, for all $x\in K$ and
all $\lambda>0$. A cone $K\subseteq\R^n$ is called \emph{convex cone} if is a
if $x+y\in K$ for any $x,y\in K$. It follows that a cone $K\subseteq\R^n$ is a
convex cone if and only if it is a convex set. A cone $K\subseteq\R^n$ is called
\emph{pointed cone} if there is no $x\in K\setminus\{0\}$ such that $-x\in K$. A cone 
$K\subseteq\R^n$ is called \emph{closed cone} if it is a closed set. A cone
$K\subseteq\R^n$ is called \emph{proper} if it is a pointed closed convex cone
with nonempty interior. The \emph{dual} $K^\perp$ of a cone $K\subseteq\R^n$ is defined by 
\[K^*=\{x\in\R^n:\lng x,y\rng\ge 0,\,\forall y\in K\}.\] The \emph{polar}
$K^\perp$ of $K$ is defined by \[K^\perp=\{x\in\R^n:\lng x,y\rng\le 0,\,\forall y\in K\}.\]

\section{Preliminaries on the sphere}

The terminology and results about the sphere follow the ones in
\cite{FerreiraIusemNemeth2013,FerreiraIusemNemeth2014}. 
Therefore, we will 
broadly describe the main concepts of spherical convexity and only repeat the
formulas explicitly used later. 

The geodesics of the sphere $\Sp$ are circles which are the
intersection of two dimensional subspaces of $\R^n$ with the sphere. 

The distance
between any two points on the sphere is the length of the smallest geodesic arc
joining the two points. The diameter of a subset of the sphere is the supremum of 
distances between all pair of points belonging to the sphere. 

Any two
points on the sphere of distance less than $\pi$ can be joined by two geodesic
arcs and two diametrically opposed points by infinitely many. The smallest
length geodesic arc joining two points is called \emph{minimal}. 

A set
$\mathcal{C}\subseteq\Sp$ is called spherically convex if for any two points of
$\mathcal{C}$ all minimal geodesic arcs joining the two points are contained in
$\mathcal{C}$. A spherically convex set is called proper, if it is nonempty and
it is not the whole sphere. This definition implicitly yields that the diameter of any
proper spherically convex set is less than $\pi$ and thus the minimal geodesic arc joining any 
two points of the set is unique. 
The proper spherically convex sets are intersections of the sphere with
pointed convex cones of $\R^n$.   

A real valued function defined on a spherically
convex set is called \emph{spherically convex} if its composition with any
minimal geodesic segment belonging to the set is a convex function defined on an
interval. This definition implies
that the only spherically convex functions on the whole sphere are the constant
ones. 

Denote by $D$ and $D^2$, the Euclidean gradient and Euclidean Hessian,
respectively. The following proposition, proved in \cite{FerreiraNemeth2019}, will be paramount
for our investigations.

\begin{proposition}\label{prop:FerreiraNemeth2019} 
	Let $K\subseteq R^n$ be a proper cone, 
	$\mathcal{C}=\Sp\cap\inte(K)$ and $f:C\to\R$ a 
	smooth function. Then, the following statements are equivalent:
	\begin{enumerate}[(i)]
		\item $f$ is spherically convex; 
		\item $\lng Df(x)-Df(y),x-y\rng + (\lng x,y\rng-1)[\lng Df(x),x\rng
			+\lng Df(y),y\rng]\ge 0$, for all $x,y\in\mathcal{C}$;
		\item $\lng D^2f(y)x,x\rng-\lng Df(y),y\rng\ge 0$, for all $y\in\mathcal{C}$,
			$x\in\Sp$ with $\lng x,y\rng=0$.
	\end{enumerate}
\end{proposition}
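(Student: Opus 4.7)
The plan is to parameterize geodesics on $\Sp$ explicitly and translate spherical convexity into one-variable calculus, in two different ways corresponding to the two characterizations.

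\emph{Setup.} I would start by noting that any geodesic through a point $y\in\C$ with unit tangent direction $x$ (so $x\in\Sp$ and $\lng x,y\rng=0$) is $\gamma(t)=\cos(t)\,y+\sin(t)\,x$, so that $\gamma'(t)=-\sin(t)\,y+\cos(t)\,x$ and, crucially, $\gamma''(t)=-\gamma(t)$. The chain rule then gives
\[
(f\circ\gamma)'(t)=\lng Df(\gamma(t)),\gamma'(t)\rng,\qquad
(f\circ\gamma)''(t)=\lng D^2f(\gamma(t))\gamma'(t),\gamma'(t)\rng-\lng Df(\gamma(t)),\gamma(t)\rng.
\]
Since $\C=\Sp\cap\inte(K)$ is open in $\Sp$, any such $\gamma$ stays in $\C$ on a neighborhood of $0$, and conversely every admissible $(y,x)$ is realized as the initial data of some geodesic in $\C$.

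\emph{(i)$\Leftrightarrow$(iii).} Spherical convexity is precisely the requirement that $f\circ\gamma$ be convex on its interval of definition, equivalently $(f\circ\gamma)''\ge 0$ pointwise along each such $\gamma$. Evaluating the second formula at $t=0$ (where $\gamma(0)=y$ and $\gamma'(0)=x$) immediately gives condition (iii), and conversely applying (iii) at the generic point $\gamma(t)\in\C$ with the unit tangent vector $\gamma'(t)$ (which is orthogonal to $\gamma(t)$) reproduces $(f\circ\gamma)''(t)\ge 0$ everywhere on the geodesic.

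\emph{(i)$\Leftrightarrow$(ii).} For any two points $y,x\in\C$ I would use the unique minimal geodesic from $y$ to $x$, of length $\theta=\arccos\lng x,y\rng\in[0,\pi)$ and with initial unit tangent $v=(x-\cos(\theta)\,y)/\sin(\theta)$. Convexity of $f\circ\gamma$ on $[0,\theta]$ is equivalent to $(f\circ\gamma)'$ being non-decreasing there, and requiring the endpoint inequality $(f\circ\gamma)'(\theta)\ge(f\circ\gamma)'(0)$ for \emph{all} pairs $y,x\in\C$ recovers this monotonicity on each geodesic (because every subsegment of a geodesic in $\C$ is itself a geodesic between two points of $\C$). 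I would compute the two end-derivatives from $(f\circ\gamma)'(t)=\lng Df(\gamma(t)),\gamma'(t)\rng$, multiply through by $\sin(\theta)>0$ to clear denominators, and collect terms; using $\cos(\theta)=\lng x,y\rng$ the resulting inequality rearranges to $\cos(\theta)(\lng Df(x),x\rng+\lng Df(y),y\rng)-(\lng Df(x),y\rng+\lng Df(y),x\rng)\ge 0$, which is exactly condition (ii) after regrouping around $\lng Df(x)-Df(y),x-y\rng$ and $(\lng x,y\rng-1)[\lng Df(x),x\rng+\lng Df(y),y\rng]$.

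\emph{Expected obstacle.} The calculus itself is routine; the delicate point is the ``endpoint only'' reduction in (i)$\Leftrightarrow$(ii). Condition (ii) at a single pair $(y,x)$ does \emph{not} in isolation give convexity of $f\circ\gamma$ on the whole segment; it is the quantification over all pairs in $\C$, combined with spherical convexity of $\C$ forcing every subgeodesic to lie in $\C$, that upgrades the pointwise endpoint inequality to monotonicity of $(f\circ\gamma)'$. I would want to state this promotion step explicitly rather than leave it implicit.
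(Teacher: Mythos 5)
Your argument is correct, but note that the paper itself offers no proof of this proposition to compare against: it is quoted verbatim from the cited reference \cite{FerreiraNemeth2019} ("The following proposition, proved in \cite{FerreiraNemeth2019}, \dots"). Your derivation is essentially the standard one used there: parameterize geodesics as $\gamma(t)=\cos(t)\,y+\sin(t)\,x$, use $\gamma''=-\gamma$ to get $(f\circ\gamma)''(t)=\lng D^2f(\gamma(t))\gamma'(t),\gamma'(t)\rng-\lng Df(\gamma(t)),\gamma(t)\rng$ for (iii), and identify (ii) as the endpoint monotonicity $(f\circ\gamma)'(\theta)\ge(f\circ\gamma)'(0)$ of the first derivative. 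Your algebra checks out: multiplying the endpoint inequality by $\sin\theta>0$ gives $\cos\theta\,[\lng Df(x),x\rng+\lng Df(y),y\rng]-\lng Df(x),y\rng-\lng Df(y),x\rng\ge 0$, which is exactly the expansion of (ii). You also correctly flagged the one genuinely delicate step, namely that (ii) quantified over \emph{all} pairs, together with the fact that every subarc of a minimal geodesic in $\mathcal{C}$ is again a minimal geodesic between two points of $\mathcal{C}$ (diameter $<\pi$), upgrades the endpoint inequality to monotonicity of $(f\circ\gamma)'$ and hence to convexity. Two small points worth making explicit if you write this up: the formulas for $D$ and $D^2$ presuppose that $f$ is (the restriction of) a smooth function on an open neighborhood of $\mathcal{C}$ in $\R^n$, which is how the paper uses the result for quadratics; and the case $x=y$ in (ii), where $\sin\theta=0$, is trivially $0\ge 0$ and should be set aside before dividing by $\sin\theta$.
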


\section{Main results}
Throughout this paper we restrict ourselves to proper cones $K\subseteq\R^n$,
that is to pointed closed convex cones with nonempty interior. Let $c\in\R$, 
$A\in\R^{n\times n}$ be a 
symmetric matrix and $b\in\R^n$. From now on $f$ will always denote a function 
$f:\Sp\cap\inte(K)\to\mathbb R$ defined by \[f(x)=\lng Ax,x\rng+\lng b,x\rng+c.\] Sometimes, to emphasize
the dependence of $f$ on $A$, $b$, $c$, we will use the more specific notation $f=f_{A,b,c}$.
Denote $\lambda_{\min}(A)$ and $\lambda_{\max}(A)$ the smallest and largest eigenvalues of $A$, 
respectively. %Let $\p(M)$ be the boundary of any set $M\subseteq\R^n$. 

The following Proposition is a direct consequence of Proposition
\ref{prop:FerreiraNemeth2019}.

\begin{proposition}%\label{prop:FerreiraNemeth2019-quad} 
	The following statements are equivalent:
	\begin{enumerate}[(i)]
		\item $f$ is spherically convex; 
		\item 
			\begin{equation}\label{eq:foc}
				\lng Au,u\rng-\lng Av,v\rng\ge \f12\lng b,v\rng,
			\end{equation}
			for all $u\in\Sp$ and all $v\in\Sp\cap K$ with $\lng u,v\rng=0$.
		\item 
			\begin{equation}\label{eq:soc}
				4\lng Ax,y\rng\le 2\lng x,y\rng\lf(\lng Ax,x\rng
				+\lng Ay,y\rng\rg)+(\lng x,y\rng-1)\lng b,x+y\rng,
			\end{equation}
			for all $x,y\in\Sp\cap K$. 
	\end{enumerate}
\end{proposition}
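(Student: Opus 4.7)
The plan is to specialize Proposition \ref{prop:FerreiraNemeth2019} to the quadratic $f(x)=\lng Ax,x\rng+\lng b,x\rng+c$. Since $Df(x)=2Ax+b$ and $D^2f(x)=2A$, the two characterizations of spherical convexity in that proposition reduce to polynomial inequalities in $x$ and $y$, and the task is merely to recognise them as (ii) and (iii) of the present statement.

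For (i)$\Leftrightarrow$(ii), I would substitute $Df$ and $D^2f$ into Proposition \ref{prop:FerreiraNemeth2019}(iii) to obtain, using $\|y\|=1$,
\[2\lng Ax,x\rng-2\lng Ay,y\rng-\lng b,y\rng\ge 0\]
for $y\in\Sp\cap\inte(K)$ and $x\in\Sp$ with $\lng x,y\rng=0$. Dividing by $2$ and relabelling $x\mapsto u$, $y\mapsto v$ gives exactly the inequality in (ii), but only for $v\in\Sp\cap\inte(K)$. The full range $v\in\Sp\cap K$ is then obtained by continuity in $v$ of both sides.

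For (i)$\Leftrightarrow$(iii), I would substitute into Proposition \ref{prop:FerreiraNemeth2019}(ii). Using the symmetry of $A$,
\[\lng Df(x)-Df(y),x-y\rng=2\lng A(x-y),x-y\rng=2\lng Ax,x\rng+2\lng Ay,y\rng-4\lng Ax,y\rng,\]
and, using $\|x\|=\|y\|=1$,
\[\lng Df(x),x\rng+\lng Df(y),y\rng=2\lng Ax,x\rng+2\lng Ay,y\rng+\lng b,x+y\rng.\]
Plugging these in, the coefficient of $\lng Ax,x\rng+\lng Ay,y\rng$ collapses from $2+2(\lng x,y\rng-1)$ to $2\lng x,y\rng$, so the inequality rearranges exactly to (iii); again density of $\Sp\cap\inte(K)$ in $\Sp\cap K$ extends the range of $x,y$ to the closure.

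The computations are routine; the only genuine subtlety is the continuity/density extension from $\inte(K)$ to $K$, which is legitimate because $f$ and its Euclidean derivatives extend smoothly to all of $\R^n$, so each side of the relevant inequalities is a continuous function of its arguments. I expect the main bookkeeping trap to be the telescoping of the $\lng Ax,x\rng+\lng Ay,y\rng$ coefficient in the derivation of (iii); beyond this there is no conceptual obstacle.
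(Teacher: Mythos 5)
Your proposal is correct and is exactly the argument the paper intends: the paper offers no written proof, stating only that the Proposition is a direct consequence of Proposition \ref{prop:FerreiraNemeth2019}, and your substitution of $Df(x)=2Ax+b$, $D^2f(x)=2A$ together with the telescoping of the $\lng Ax,x\rng+\lng Ay,y\rng$ coefficient is the intended routine verification. The only point worth tightening is the closure step: when passing from $v\in\Sp\cap\inte(K)$ to $v\in\Sp\cap K$ in (ii) you must also perturb $u$ (e.g.\ project onto $(v^k)^\perp$ and renormalize) so that orthogonality is preserved along the approximating sequence, but this is standard and does not affect correctness.
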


The following lemma demonstrates that the spherical convexity of \(f\) remains unchanged if any constant multiple of the 
identity matrix is added to \(A\).
  
\begin{lemma}\label{lem:lam}
	Let $K$ be a proper cone, $A=A\tp\in\R^{n\times n}$, $b\in\R^n$ and $c,\lambda\in\R$. Then, 
	$f_{A,b,c}$ is spherically convex if and only if $f_{A-\lambda I,b,c}$ is spherically convex.
\end{lemma}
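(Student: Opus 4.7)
The plan is to reduce the statement to the trivial fact that spherical convexity is preserved under adding a constant to the function. The key observation is that every point $x$ in the domain $\Sp\cap\inte(K)$ satisfies $\langle x,x\rangle=1$, so the quadratic form $\langle (A-\lambda I)x,x\rangle$ differs from $\langle Ax,x\rangle$ by the constant $-\lambda$ for \emph{every} admissible $x$. Consequently, as functions on $\Sp\cap\inte(K)$,
\[
f_{A-\lambda I,b,c}(x)=\langle Ax,x\rangle-\lambda+\langle b,x\rangle+c=f_{A,b,c-\lambda}(x),
\]
so the two functions in the lemma differ by the constant $\lambda$ on the whole domain.

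Once this pointwise identity is in place, the conclusion is immediate: by the very definition of spherical convexity, a function is spherically convex iff its composition with any minimal geodesic segment in the domain is convex on an interval, and adding a constant to a function of one real variable does not affect its convexity. Thus $f_{A,b,c}$ is spherically convex iff $f_{A,b,c-\lambda}=f_{A-\lambda I,b,c}$ is spherically convex.

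As an alternative check that fits naturally with the rest of the paper, one can verify the equivalence directly through Proposition \ref{prop:FerreiraNemeth2019}(iii): replacing $A$ by $A-\lambda I$ changes $D^2 f(y)$ by $-2\lambda I$ and $Df(y)$ by $-2\lambda y$, so for $x,y\in\Sp$ with $\langle x,y\rangle=0$ the quantity $\langle D^2 f(y)x,x\rangle-\langle Df(y),y\rangle$ picks up an extra $-2\lambda\|x\|^2+2\lambda\|y\|^2=0$. Either route yields the lemma; there is no genuine obstacle, so I would present the first (one-line) argument for brevity.
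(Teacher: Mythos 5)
Your proof is correct, and it takes a genuinely different (and more elementary) route than the paper's. The paper works through the first-order characterization \eqref{eq:foc}: it defines $E(A,b,c,u,v)=2\lng Au,u\rng-2\lng Av,v\rng-\lng b,v\rng$ and checks that $E(A-\lambda I,b,c,u,v)=E(A,b,c,u,v)$, the $\lambda$-terms cancelling because $\|u\|=\|v\|=1$. You instead observe the stronger and more transparent fact that $f_{A-\lambda I,b,c}$ and $f_{A,b,c}$ coincide up to the additive constant $-\lambda$ on the entire domain $\Sp\cap\inte(K)$, so the lemma reduces to the invariance of convexity (of a one-variable function, hence of spherical convexity via its very definition as convexity along minimal geodesics) under adding constants. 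Your argument needs neither Proposition \ref{prop:FerreiraNemeth2019} nor the derived inequality \eqref{eq:foc}, and it also makes clear \emph{why} the lemma holds: on the sphere the quadratic forms of $A$ and $A-\lambda I$ define the same function modulo constants. The paper's route has the mild advantage of staying inside the single analytic framework (\eqref{eq:foc}) used throughout the rest of the paper, but the underlying cancellation is the same $\lng\lambda Ix,x\rng=\lambda$ identity in both cases. Your secondary check via Proposition \ref{prop:FerreiraNemeth2019}(iii) is also correct.
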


\begin{proof}
	For any $u,v\in\Sp$ with $v\in K$ and $\lng u,v\rng=0$ denote 
	\[E(A,b,c,u,v):=2\lng Au,u\rng-2\lng Av,v\rng-\lng b,v\rng.\] Then, 
	according to \eqref{eq:foc}, $f_{A,b,c}$ is spherically convex if and only if
	$E(A,b,c,u,v)\ge 0$ for all $u,v\in\Sp$ with $v\in K$ and $\lng u,v\rng=0$.
	It is easy to check that $E(A-\lambda I,b,c,u,v)=E(A,b,c,u,v)$. Hence, the result readily 
	follows. 
\end{proof}

The following theorem lists necessary conditions, sufficient conditions, and
equivalent conditions for $f$ to be spherically convex. The necessary conditions
(ii)-(v) can serve as negative certificates for the spherical convexity of $f$.
If any of these conditions do not hold, then $f$ is not spherically convex. The
sufficient condition (vi) provides a positive certificate for the spherical
convexity of $f$. If this condition holds, then $f$ is spherically convex.
Moreover, this condition demonstrates that for any symmetric matrix 
$A\in\mathbb{R}^{n\times n}$, there are infinitely many $b$ such that $f$ is
spherically convex, making the class of spherically convex non-homogeneous quadratic functions 
much larger than the class of (Euclidean) convex non-homogeneous quadratic functions. Note that
the vector $b$ has no influence on the (Euclidean) convexity of $f$, but it is
crucial for its spherical convexity. Even more interestingly, if $K=\R^n_+$, then this 
sharply contrasts with the 
fact that the class of spherically convex homogeneous functions is trivial, that is, the 
function $f$ is spherically convex for $b=0$ if and only if it is constant
\cite{FerreiraNemeth2019}.

\begin{theorem}\label{th:lt} 
	The following statements hold.
	\begin{enumerate}[(i)]
		\item Suppose that $A=0$. Then, $f$ is spherically convex if and only if $b\in K^\perp$.
		\item Suppose that $K^*\subseteq K$ and
			$f$ is sperically convex. Then, 
			\begin{equation}\label{eq:mix}
				\lng Ax,y\rng\le-\f{\sqrt 2}8\lng b,x+y\rng,
			\end{equation}
			for all $x,y\in\Sp\cap K$ with $\lng x,y\rng=0$. In 
			particular, if $K=\R^n_+$, then 
			\begin{equation}\label{eq:mixp}
				a_{ij}\le-\f{\sqrt2}8(b_i+b_j),
			\end{equation}
			for all $i,j\in [n]$ with $i\ne j$.
		\item\label{lc} Suppose that $f$ is spherically convex. Then, $\lng
			b,x+y\rng\le 0$, for all $x,y\in\Sp\cap K$ with $\lng x,y\rng=0$. In
			particular, if $\R^n_+\subseteq K$ and $f$ is spherically convex, then 
			\begin{equation}\label{eq:sij}
				b_i+b_j\le 0,
			\end{equation} 
			for all $i,j\in [n]$ with $i\ne j$, and
			\begin{equation}\label{eq:a}
				\min\lf\{a_{ii}:i\in [n]\rg\}\ge\max\lf\{b_j+a_{jj}:j\in [n]
				\setminus\argmin\lf\{a_{ii}:i\in [n]\rg\}\rg\}.
			\end{equation}
		\item Suppose that $f$ is spherically convex. Then, 
			\begin{equation}\label{eq:su}
				\lng b,x+y\rng\le-4\sqrt2\lng Ax,y\rng^+,	
			\end{equation} 
			for all $x,y\in\Sp\cap K$ with $\lng x,y\rng=0$.
			In particular, if $K=\R^n_+$, then 
			\begin{equation}\label{eq:sup}
				b_i+b_j\le -4\sqrt2a_{ij}^+,
			\end{equation}
			for all $i,j\in [n]$ with $i\ne j$.
		\item If $K=\R^n_+$, $A$ has positive entries and $f$ is spherically 
			convex, then 
			\begin{equation}\label{eq:bminus}
				\|b_-\|\ge2\lf[\lambda_{\max}(A)-\lambda_{\min}(A)\rg].
			\end{equation}
		\item If $K\subseteq\R^n_+$ and 
			\[b_i\le 2\sqrt{n}\lf[\lambda_{\min}(A)-\lambda_{\max}(A)\rg],\] for all 
			$i\in [n]$, then $f$ is spherically convex.  
	\end{enumerate}
\end{theorem}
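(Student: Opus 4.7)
The plan is to apply the equivalent characterization \eqref{eq:foc} and bound both sides using Rayleigh quotients together with a standard norm comparison for nonnegative vectors.

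By \eqref{eq:foc}, it suffices to verify
\[
\lng Au,u\rng - \lng Av,v\rng \ge \tfrac12\lng b,v\rng
\]
for every $u\in\Sp$ and every $v\in\Sp\cap K$ with $\lng u,v\rng=0$. Set $\Delta:=\lambda_{\min}(A)-\lambda_{\max}(A)\le 0$. The Rayleigh bounds $\lng Au,u\rng\ge\lambda_{\min}(A)$ and $\lng Av,v\rng\le\lambda_{\max}(A)$ immediately give that the left-hand side is at least $\Delta$, regardless of the orthogonality constraint. (One could equivalently first invoke Lemma~\ref{lem:lam} to normalise $\lambda_{\max}(A)=0$, but this saves nothing since only the difference $\Delta$ matters.)

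For the right-hand side, the inclusion $K\subseteq\R^n_+$ forces $v_i\ge 0$ for every $i$. Using $b_i\le 2\sqrt{n}\,\Delta$ and $\Delta\le 0$, one obtains
\[
\lng b,v\rng=\sum_{i=1}^n b_iv_i\le 2\sqrt{n}\,\Delta\sum_{i=1}^n v_i.
\]
Since $v$ is a unit vector with nonnegative entries, $\sum_i v_i=\|v\|_1\ge\|v\|_2=1$, and multiplying this by the nonpositive quantity $2\sqrt{n}\,\Delta$ reverses the inequality, yielding $2\sqrt{n}\,\Delta\sum_i v_i\le 2\sqrt{n}\,\Delta\le 2\Delta$, where the last step uses $\sqrt{n}\ge 1$ together with $\Delta\le 0$. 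Halving gives $\tfrac12\lng b,v\rng\le\Delta$, which combined with the lower bound on the left-hand side finishes the proof.

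There is no substantial obstacle: this is essentially a one-line application of \eqref{eq:foc} once the two elementary facts ($\|v\|_1\ge\|v\|_2$ for $v\in\R^n_+$, and the eigenvalue sandwich on the Rayleigh quotient) are in hand. The only point requiring care is bookkeeping the sign of $\Delta$, since several of the inequality reversals arise from multiplying by nonpositive quantities; mis-tracking this sign is the most likely source of error.
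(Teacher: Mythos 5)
Your proposal addresses only item~(vi) of the theorem. The statement you were asked to prove has six parts: item~(i) characterises the case $A=0$ via $b\in K^\perp$; items~(ii)--(v) are necessary conditions obtained by substituting particular pairs $(u,v)$ into \eqref{eq:foc} (for (ii) the rotated pair $u=(\sqrt2/2)(x-y)$, $v=(\sqrt2/2)(x+y)$, which is where the hypothesis $K^*\subseteq K$ and the constant $\sqrt2/8$ come from; for (iii) the symmetrised pair obtained by swapping the roles of two orthogonal vectors in $\Sp\cap K$; for (v) eigenvectors of $A$ belonging to $\lambda_{\min}(A)$ and $\lambda_{\max}(A)$, the latter chosen in $\R^n_+$ via Perron--Frobenius). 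None of these appear in your write-up, so as a proof of the theorem as stated it is incomplete: five of the six claims are unproved. This is the substantive gap.

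For item~(vi) itself your argument is correct, and it takes a genuinely different (and in fact sharper) route than the paper. The paper extracts a single coordinate $v_{i_0}\ge 1/\sqrt n$ (which must exist since $\|v\|=1$ and $v\ge 0$) and keeps only the term $(-b_{i_0})v_{i_0}$ in $\lng -b,v\rng$; the factor $\sqrt n$ in the hypothesis exists precisely to compensate for $v_{i_0}$ possibly being as small as $1/\sqrt n$. You instead keep all terms and use $\|v\|_1\ge\|v\|_2=1$ for $v\in\R^n_+$, which shows that the hypothesis $b_i\le 2\sqrt n\,[\lambda_{\min}(A)-\lambda_{\max}(A)]$ can actually be weakened to $b_i\le 2\,[\lambda_{\min}(A)-\lambda_{\max}(A)]$ --- a genuine improvement of the stated sufficient condition that would be worth recording explicitly. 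Your sign bookkeeping for $\Delta\le 0$ is handled correctly throughout. But you still need to supply proofs of items~(i)--(v) before this can stand as a proof of the theorem.
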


\begin{proof} 
	$\,$

	\begin{enumerate}[(i)]
		\item Suppose that $A=0$. Then, by using inequality \eqref{eq:foc}, it follows that
			$f$ is spherically convex if and only if $\lng b,v\rng\le 0$ for all 
			$v\in\Sp\cap K$. Therefore, $f$ is spherically convex if and only if 
			$b\in K^\perp$.
		\item Inequality \eqref{eq:mix} is a consequence of inequality 
			\eqref{eq:foc} with $u=\lf(\sqrt2/2\rg)(x-y)$ and
			$v=\lf(\sqrt2/2\rg)(x+y)$ because $v\in K$ and it can be easily
			verified that $\|u\|=\|v\|=1$, $\lng u,v\rng=0$.  
			In particular, if $K=\R^n_+$, then inequality \eqref{eq:mixp} follows from
			\eqref{eq:mix} by taking $x=e^i$ and $y=e^j$.
		\item Suppose that $f$ is spherically convex and let any $x,y\in
			\Sp\cap K$ with $\lng x,y\rng=0$. Then, the first
			inequality of the statement follows by adding the inequalities \[\lng
			Ax,x\rng-\lng Ay,y\rng\ge\f 12\lng b,y\rng,\quad\lng
			Ay,y\rng-\lng Ax,x\rng\ge\f 12\lng b,x\rng,\] which are
			consequences of inequality \eqref{eq:foc}.

			The next inequality of the statement follows by taking any
			$i,j\in [n]$ with $i\ne j$ and $x=e^i$, $y=e^j$.
			On the other hand, inequality \eqref{eq:foc} implies 
 			\begin{equation}\label{eq:ij}
				2(a_{ii}-a_{jj})\ge b_j. 
			\end{equation}
			Inequality \eqref{eq:a} is a straightforward 
			consequence of inequality \eqref{eq:ij}.
		\item It straightforwardly follows from inequalities
			\eqref{eq:mix}, \eqref{eq:mixp}, \eqref{eq:su} and \eqref{eq:sij}. 
		\item By using the Cauchy inequality we have
			\begin{equation}\label{eq:pf}
				2\lf[\lng Au,u\rng-\lng Av,v\rng\rg]\ge\lng b,v\rng
				=\lng b_+-b_-,v\rng\ge-\lng b_-,v\rng\ge-\|b_-\|.
			\end{equation} 
			Let $u,v$ be unit eigenvectors of $A$ corresponding 
			to $\lambda_{\min}(A),\lambda_{\max}(A)$, respectively. From the Perron-Frobenius
		theorem \cite{Perron1907,Frobenius1912} it follows that we can choose $v\in\mathbb\R^n_+=K$, hence our choice of 
			$u,v$ is feasible. Then, \eqref{eq:pf} implies \eqref{eq:bminus}.  
		\item Since 
			$v\in S\cap K$, it follows 
			that there exists an $i_0\in [n]$ with $v_{i_0}\ge 1/\sqrt{n}$. 
			Indeed, otherwise 
			$0\le v_i<1/\sqrt{n}$, for all $i\in [n]$ implies $\|v\|<1$, which 
			contradicts 
			$v\in\Sp$. We have 
			\[-b_i\ge 2\sqrt{n}\lf[\lambda_{\max}(A)-\lambda_{\min}(A)\rg]\ge 0,\] 
			for all 
			$i\in [n]$, which implies 
			\[\f12\lng-b,v\rng\ge\f12(-b_{i_0})v_{i_0}\ge\lambda_{\max}(A)
			-\lambda_{\min}(A)\ge\lng Av,v\rng-\lng Au,u\rng,\] or equivalently 
			\[\lng Au,u\rng-\lng Av,v\rng\ge\f12\lng b,v\rng.\] Hence, inequality 
			\eqref{eq:foc} implies that $f$ is spherically convex. 
	\end{enumerate}
\end{proof}

The next theorem presents a characterization of spherically convex functions. Although this characterization depends on a single vector $x\in\Sp\cap\inte(K)$, it also involves an implicit 
term that resembles a scalar derivative, similar to those defined in \cite{IsacNemeth2008} (see
\cite{Nemeth1992} for the earliest definition of the scalar derivative).

\begin{theorem}\label{th:li}
	The function $f$ is spherically convex, if and only if 
	\begin{eqnarray*}
		\f12\lng b,x\rng\le\liminf_{\mathclap{\substack{y\to x\\\|y\|=1\\y\in K}}}
		\f{\lng Ay-Ax,y-x\rng}{\|y-x\|^2}-\lng Ax,x\rng,
	\end{eqnarray*}
	for all $x\in\Sp\cap\inte(K)$.
\end{theorem}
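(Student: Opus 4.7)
The plan is to show that the claimed liminf inequality is nothing but condition (iii) of Proposition \ref{prop:FerreiraNemeth2019}, rewritten in a pointwise, directional form. For $f=f_{A,b,c}$ one has $Df(x)=2Ax+b$ and $D^2f(x)=2A$, so Proposition \ref{prop:FerreiraNemeth2019}(iii) applied on $\mathcal{C}=\Sp\cap\inte(K)$ asserts that spherical convexity of $f$ is equivalent to
\[\lng Aw,w\rng-\lng Ax,x\rng\ge\f12\lng b,x\rng\quad\text{for all }x\in\mathcal{C},\,w\in\Sp,\,\lng w,x\rng=0,\]
or equivalently, to the condition that for every $x\in\mathcal{C}$,
\[\f12\lng b,x\rng+\lng Ax,x\rng\le\min\lf\{\lng Aw,w\rng:w\in\Sp,\,\lng w,x\rng=0\rg\}.\]

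The crux is then to show that the liminf in the theorem coincides with this minimum. Setting $w_y:=(y-x)/\|y-x\|$, bilinearity gives $\lng Ay-Ax,y-x\rng/\|y-x\|^2=\lng Aw_y,w_y\rng$, while the identity $\lng y-x,y+x\rng=0$, which holds because $\|x\|=\|y\|=1$, yields $\lng w_y,y+x\rng=0$. For the $(\ge)$ direction, along any sequence $y_n\to x$ in $\Sp\cap K$ the unit vectors $w_n$ admit cluster points by Bolzano--Weierstrass, and any cluster point $\bar w$ inherits $\|\bar w\|=1$ and $\lng\bar w,x\rng=0$ after passing to the limit in the previous identity; continuity of the quadratic form then bounds $\liminf\lng Aw_n,w_n\rng$ below by the minimum. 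For the $(\le)$ direction, I would fix a minimiser $\bar w$ and probe the liminf along the great circle $y_t:=\cos(t)x+\sin(t)\bar w$: because $x\in\inte(K)$, one has $y_t\in\Sp\cap K$ for all sufficiently small $t>0$, and an elementary expansion (using $\|y_t-x\|^2=2(1-\cos t)$ together with $\lng x,\bar w\rng=0$) shows that $\lng A(y_t-x),y_t-x\rng/\|y_t-x\|^2\to\lng A\bar w,\bar w\rng$ as $t\to0^+$.

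Combining the two reductions gives the theorem: the claimed liminf inequality holds for every $x\in\mathcal{C}$ if and only if the minimum inequality holds for every $x\in\mathcal{C}$, which by Proposition \ref{prop:FerreiraNemeth2019}(iii) is exactly spherical convexity of $f$. The main obstacle I anticipate is the $(\le)$ half of the liminf identity, where one must keep the great-circle probe $y_t$ inside $K$; this is precisely where the hypothesis $x\in\inte(K)$ (rather than merely $x\in\Sp\cap K$) is used, and it also explains why the theorem is stated on the interior. Everything else amounts to continuity of the quadratic form and a short Taylor-type expansion in $t$.
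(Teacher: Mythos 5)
Your argument is correct, and it reaches the conclusion by a genuinely different organization than the paper's. The paper proves the forward implication by starting from the two-point inequality \eqref{eq:soc}, substituting $t=\|y-x\|$, $u=(y-x)/t$, using $2-2\lng x,y\rng=t^2$, and massaging the resulting expression into the liminf inequality; for the converse it uses the same great-circle probe $y=(\cos t)x+(\sin t)v$ that you use, but then lands on \eqref{eq:foc} and needs an extra continuity step to pass from $x\in\Sp\cap\inte(K)$ to $x\in\Sp\cap K$. You instead prove the identity $\liminf = \min\{\lng Aw,w\rng : w\in\Sp,\ \lng w,x\rng=0\}$ outright — the lower bound via Bolzano--Weierstrass applied to $w_y=(y-x)/\|y-x\|$ together with $\lng w_y,y+x\rng=0$, the upper bound via the great-circle probe — and then observe that the resulting pointwise condition is literally Proposition \ref{prop:FerreiraNemeth2019}(iii) for $f_{A,b,c}$. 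What your route buys: the compactness half is exactly the argument the paper postpones to the proof of \eqref{eq:sd1ge} in Proposition \ref{prop:sd}, so you have front-loaded that identity and made Theorem \ref{th:li} transparently a reformulation of Proposition \ref{prop:FerreiraNemeth2019}(iii) (which the paper only remarks upon afterwards); you also avoid the boundary-extension step, since Proposition \ref{prop:FerreiraNemeth2019}(iii) is already stated on $\mathcal{C}=\Sp\cap\inte(K)$. You are also right to flag, and correctly resolve, the one delicate point: the probe direction $\bar w$ (a minimiser) need not lie in $K$, and it is precisely $x\in\inte(K)$ that keeps $y_t$ inside $K$ for small $t>0$ — the paper's own converse restricts the probe to $v\in\Sp\cap K$ yet asserts the conclusion for all $v\perp x$, so your version is the cleaner statement of that step. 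What the paper's route buys in exchange is a forward direction that is pure algebra on \eqref{eq:soc}, with no compactness or extraction of convergent subsequences.
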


\begin{proof}
	Suppose that $f$ is spherically convex. Let $x\in\Sp\cap\inte(K)$ and $y\in\Sp\cap K$, such that $y\ne x$. Then, $1-\lng x,y\rng>0$. 
	Hence, according to \eqref{eq:soc} we have 
	\begin{equation}\label{wm1}
		\lng b,x+y\rng\le\f{4\lng x,y\rng\lf(\lng Ax,x\rng+\lng Ay,y\rng\rg)
		-8\lng Ax,y\rng}{2-2\lng x,y\rng}.
	\end{equation}
	Let $t=\|y-x\|$ and $u=(1/t)(y-x)$. Then, $y=x+tu$ and $\|y\|=\|x\|=\|u\|=1$ imply
	\begin{eqnarray*}
		2-2\lng x,y\rng=2-2\lng x,x+tu\rng=-2t\lng x,u\rng=-\|x+tu\|^2+\|x\|^2+t^2\|u\|^2=
		t^2.
	\end{eqnarray*}
	Hence, \eqref{wm1} becomes 
	\begin{eqnarray*}
		2\lng b,x\rng\le
		\f{\lf(4-2t^2\|u\|^2\rg)\lf(2\lng Ax,x\rng+2t\lng Ax,u\rng+t^2\lng Au,u\rng\rg)
		-8\lng Ax,x\rng-8t\lng Ax,u\rng}{t^2}\\
		=4\lng Au,u\rng-4\lng Ax,x\rng-4t\lng Ax,u\rng-2t^2\lng Au,u\rng-t\lng b,u\rng,
	\end{eqnarray*}
	which implies
	\begin{eqnarray*}
		2\lng b,x\rng\le\liminf_{\mathclap{\substack{y\to x\\\|y\|=1\\y\in K}}}
		\lf(4\lng Au,u\rng-4\lng Ax,x\rng-4t\lng Ax,u\rng-2t^2\lng Au,u\rng-t\lng b,u\rng\rg).
	\end{eqnarray*}
	Hence,
	\begin{eqnarray*}
		\f12\lng b,x\rng\le\liminf_{\mathclap{\substack{y\to x\\\|y\|=1\\y\in K}}}
		\f{\lng Ay-Ax,y-x\rng}{\|y-x\|^2}-\lng Ax,x\rng.
	\end{eqnarray*}
	Conversely, suppose that 
	\begin{eqnarray}\label{eq:li2}
		\f12\lng b,x\rng\le\liminf_{\mathclap{\substack{y\to x\\\|y\|=1\\y\in K}}}
		\f{\lng Ay-Ax,y-x\rng}{\|y-x\|^2}-\lng Ax,x\rng,
	\end{eqnarray}
	for all $x\in\Sp\cap\inte(K)$. Let $y=(\cos t)x+(\sin t)v$, where $t\in
	(0,2\pi)$ and $v\in\Sp\cap K$, $\lng v,x\rng=0$. Then, $y\in\Sp\cap K$ and $y\ne x$. 
	We have
	\begin{equation*}
		\f{\lng Ay-Ax,y-x\rng}{\|y-x\|^2}
		=\f{(\cos t-1)^2\lng Ax,x\rng+(\sin(2t)-2\sin t)\lng
		Av,x\rng+\sin^2 t\lng Av,v\rng}{2-2\cos t}
	\end{equation*}
	Tending with $t$ to zero and using the L'Hopital rule twice we obtain
	\begin{equation*}
		\liminf_{\mathclap{\substack{y\to x\\\|y\|=1\\y\in K}}}
		\f{\lng Ay-Ax,y-x\rng}{\|y-x\|^2}\le\lng Av,v\rng.	
	\end{equation*}
	Hence, \eqref{eq:li2} implies 
	\begin{equation*}
		\f12\lng b,x\rng\le\lng Av,v\rng-\lng Ax,x\rng
	\end{equation*}
	for all $x\in\Sp\cap \inte(K)$ and any $v\in\Sp$ with $v\perp x$. By taking a limit and using continuity, we can assume that the same
	inequality holds for all $x\in\Sp\cap K$. Hence,
	\eqref{eq:foc} holds and therefore $f$ is spherically convex.
\end{proof}

Although the proofs of Theorem \ref{th:li} and Proposition \ref{prop:sd} suggest
that Theorem \ref{th:li} is merely a reformulation of Proposition 
\ref{prop:FerreiraNemeth2019} (at least when the matrix $A$ is positive definite), it leads to 
the following Corollary, which offers a new type of characterization of spherically convex 
non-homogeneous quadratic functions. This characterization depends only on a unit vector in 
the cone and an arbitrary unit vector that is not related to it. Previously, our 
characterizations involving two unit vectors either required that both vectors be in $K$ or that
one unit vector be in $K$ and the other be perpendicular to it.

\begin{corollary}
	The function $f$ is spherically convex if and only if 
	\begin{equation}\label{eq:bx}
		\f12\lng b,x\rng
		\le\f{\lng Av,v\rng-2\lng v,x\rng\lng Av,x\rng+(2\lng
		v,x\rng^2-1)\lng Ax,x\rng}{1-\lng v,x\rng^2},
	\end{equation}
	for all $x\in\Sp\cap K$ and all unit vectors $v$ with $v\notin\{x,-x\}$.
	%In particular, if $v=x$, then
	%\begin{equation*}%\label{eq:bx2}
	%	\lng b,x\rng\le -2\lng Ax,x\rng,
	%\end{equation*}
	%and if $v$ is any unit eigenvector corresponding to the smallest eigenvalue 
	%$\lambda_{\min}(A)$ of $A$, then
	%\begin{equation*}%\label{eq:bx3}
	%	\lng b,x\rng\le\lambda_{\min}(A)-2\lng Av,x\rng-\lng Ax,x\rng\le -2\lng Av,x\rng.
	%\end{equation*}
\end{corollary}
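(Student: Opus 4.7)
The plan is to reduce the claim to the first-order characterization \eqref{eq:foc} by exploiting the fact that any unit vector $v \notin \{x,-x\}$ determines, via orthogonal projection away from $x$, a canonical unit vector $u \perp x$. Specifically, for $x \in \Sp \cap K$ and a unit vector $v \notin \{x,-x\}$, I will set
\[
u = \frac{v - \langle v,x\rangle x}{\sqrt{1 - \langle v,x\rangle^2}},
\]
which is well defined because $\langle v,x\rangle^2 < 1$, and satisfies $\|u\| = 1$ and $\langle u,x\rangle = 0$.

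For the forward implication, assuming $f$ is spherically convex, I would apply \eqref{eq:foc} with this $u$ and with $x$ in the role played there by $v$. A direct expansion gives
\[
\langle Au,u\rangle = \frac{\langle Av,v\rangle - 2\langle v,x\rangle \langle Av,x\rangle + \langle v,x\rangle^2 \langle Ax,x\rangle}{1 - \langle v,x\rangle^2},
\]
so that $\langle Au,u\rangle - \langle Ax,x\rangle \ge \tfrac12 \langle b,x\rangle$ rearranges, after combining the $\langle Ax,x\rangle$ terms over the common denominator $1 - \langle v,x\rangle^2$, precisely to \eqref{eq:bx}.

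For the converse, I would observe that any $u \in \Sp$ with $\langle u,x\rangle = 0$ can itself be used as the arbitrary unit vector $v$ in \eqref{eq:bx}: then $\langle v,x\rangle = 0$, the denominator equals $1$, the cross term vanishes, and the right-hand side of \eqref{eq:bx} collapses to $\langle Au,u\rangle - \langle Ax,x\rangle$, recovering \eqref{eq:foc}. To handle boundary points $x \in \Sp \cap \partial K$, I would invoke a density/continuity argument, noting that \eqref{eq:foc} holds on $\Sp \cap K$ by continuity of both sides in $x$.

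No real obstacle is expected beyond bookkeeping: the key step is simply the algebraic identity expressing $\langle Au,u\rangle$ in terms of $v, x$, and the main care needed is to verify that $v$ ranging over all unit vectors distinct from $\pm x$ produces exactly the set of unit vectors $u$ perpendicular to $x$ (surjectively, since given $u \perp x$ one can take $v = u$ itself), so that the forward and backward substitutions are genuinely equivalent.
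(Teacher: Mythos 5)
Your proof is correct, and it takes a genuinely different and more direct route than the paper. The paper derives this corollary from Theorem \ref{th:li}: it parametrizes $y=(x+tv)/\|x+tv\|$, computes $\lim_{t\to 0}\lng Ay-Ax,y-x\rng/\|y-x\|^2$ via two applications of L'H\^opital's rule to bound the inferior limit from above by the right-hand side of \eqref{eq:bx}, and then handles $x$ on the boundary of $K$ by an approximation argument (Theorem \ref{th:li} itself being stated only on $\Sp\cap\inte(K)$). You instead bypass the scalar-derivative machinery entirely: the normalized projection $u=(v-\lng v,x\rng x)/\sqrt{1-\lng v,x\rng^2}$ satisfies $\|u\|=1$, $\lng u,x\rng=0$, and the identity
\[
\lng Au,u\rng-\lng Ax,x\rng=\f{\lng Av,v\rng-2\lng v,x\rng\lng Av,x\rng+(2\lng v,x\rng^2-1)\lng Ax,x\rng}{1-\lng v,x\rng^2},
\]
so \eqref{eq:bx} is literally inequality \eqref{eq:foc} (with $v$ there replaced by $x$) rewritten in terms of an arbitrary unit vector $v\ne\pm x$; the surjectivity remark (take $v=u$) gives the converse. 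This makes the equivalence with \eqref{eq:foc} transparent and purely algebraic, with no limits at all. One small simplification you can make: since \eqref{eq:foc} is already stated for all $v\in\Sp\cap K$ (not merely the interior), the density/continuity step you reserve for boundary points of $K$ is unnecessary in your argument — it is needed only in the paper's route, where the intermediate Theorem \ref{th:li} lives on $\Sp\cap\inte(K)$.
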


\begin{proof}
	Let $f$ be spherically converse. First, suppose that $x\in\Sp\cap\inte(K)$. Let $v$ be a
	unit vector such
	that $v\notin\{x,-x\}$. 
	%eigenvector corresponding to the smallest eigenvalue of $A$ 
	and $y=(x+tv)/\|x+tv\|$, where $t>0$ is such that $\|x+tv\|\ne 0$. If $t>0$ is
	sufficiently small then $\|x+tv\|\ne 0$ and $y\in\Sp\cap K$. Moreover $y\ne x$. 
	First, let us calculate $\psi'(0)$, where $\psi(t)=\|x+tv\|$. We have\
	\begin{eqnarray*}
		\f{\psi(t)-\psi(0)}t=\f{\|x+tv\|-1}t=\f{(\|x+tv\|-1)(\|x+tv\|+1)}{t(\|x+tv\|+1}
		=\f{\lng x+tv,x+tv\rng-1}{t(\|x+tv\|+1}\\
			=\f{2\lng x,v\rng+t\|v\|^2}{\|x+tv\|+1}
	\end{eqnarray*}
	Letting $t$ tend to zero in the above formula, we obtain
	\begin{equation*}%\label{eq:psid}
		\f{d}{dt}|_{t=0}\|x+tv\|=\lng v,x\rng.
	\end{equation*}
	Multiplying the numerator and 
	denominator of $\lng Ay-Ax,y-x\rng/\|y-x\|^2$ by $\|x+tv\|^2$, we obtain
	\begin{eqnarray*}
		\f{\lng Ay-Ax,y-x\rng}{\|y-x\|^2}
		=\f{\lng Ax+tAv-\|x+tv\|Ax,x+tv-\|x+tv\|x\rng}{\lng x+tv-\|x+tv\|x,x+tv-\|x+tv\|x\rng}
	\end{eqnarray*}
	Tending with $t$ to zero and using the L'Hopital rule twice, we obtain
	\begin{eqnarray*}
		\liminf_{\mathclap{\substack{y\to x\\\|y\|=1\\y\in K}}}
		\f{\lng Ay-Ax,y-x\rng}{\|y-x\|^2}\le\f{\lng Av,v\rng-2\lng v,x\rng\lng Av,x\rng
		+\lng v,x\rng^2\lng Ax,x\rng}
		{1-\lng v,x\rng^2}.\\
		%=\f12\lf(\lambda_{\min}(A)-2\lng Av,x\rng+\lng Ax,x\rng\rg).
	\end{eqnarray*}
	Hence, Theorem \ref{th:li} implies \eqref{eq:bx}. If $v\in\{x,-x\}$, 
	then let $v^n$ be unit vectors such that $v^n\to v$ if $n\to+\infty$. Writing inequality 
	\eqref{eq:bx} with $v^n$ replacing $v$ and tending with $n$ to infinity, we obtain inequality
	\eqref{eq:bx}. If $x$ is on the the intersection of $\Sp$ 
	with the boundary of $K$, then there
	exists $x^k\in\Sp\cap\inte(K)$ such that $\lim_{k\to\infty}x^k=x$. Hence,
	\begin{equation*}
		\f12\lng b,x^k\rng\le\f{\lng Av,v\rng-2\lng v,x^k\rng\lng Av,x^k\rng
		+(2\lng v,x^k\rng^2-1)\lng Ax^k,x^k\rng}{1-\lng v,x^k\rng^2},
	\end{equation*}
	Tending with $k$ to infinity 
	in the last inequality we obtain \eqref{eq:bx}. 
	%The remaining inequalities follow easily from
	%\eqref{eq:bx} and $\lambda_{\min}(A)\le\lng Ax,x\rng$.
	Conversely suppose that 
	\begin{equation*}
		\f12\lng b,x\rng
		\le\f{\lng Av,v\rng-2\lng v,x\rng\lng Av,x\rng+(2\lng
		v,x\rng^2-1)\lng Ax,x\rng}{1-\lng v,x\rng^2},
	\end{equation*}
	for all $x\in\Sp\cap K$ and all unit vectors $v$ with $v\notin\{x,-x\}$.
	Let $v\in\Sp$ such that $\lng v,x\rng=0$. Then, inequality \eqref{eq:foc}
	holds and therefore $f$ is spherically convex.
\end{proof}

Without entering in details, we remark that the inferior limit in Theorem \ref{th:li} is a 
variant of the scalar derivative defined by S. Z. N\'emeth in \cite{Nemeth1992}
(for other variants see \cite{IsacNemeth2008}). In the next 
proposition we show how to determine this inferior limit in the case when $A$ is positive
definite. Note that, by Lemma \ref{lem:lam}, the positive definiteness of $A$ is not a restriction on the
spherical convexity of $f=f_{A,b,c}$ because the spherical convexity of $f$ is
equivalent to the spherical convexity of $f_{A+\alpha I,b,c}$ and we can take
$\alpha$ sufficiently large for $A+\alpha I$ to be spherically convex.

\begin{proposition}\label{prop:sd}
	Suppose that $A$ is positive definite and let $x\in\Sp\cap\inte(K)$. Then, 
	\begin{equation}\label{eq:sd}
		\liminf_{\mathclap{\substack{y\to x\\\|y\|=1\\y\in K}}}
		\f{\lng
		Ay-Ax,y-x\rng}{\|y-x\|^2}=\min_{\substack{u\in\Sp\\\lng
		u,x\rng=0}}\lng Au,u\rng=\lambda_{\min}(P_xAP_x+\lambda Q_x), 	
	\end{equation}
	where $P_x=I-xx\tp$, $Q_x=xx\tp$ and $\lambda=\lng Ar,r\rng$, for any
	fixed $r\in\Sp$ with $\lng r,x\rng=0$.
\end{proposition}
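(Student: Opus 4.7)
The plan is to establish the two equalities in \eqref{eq:sd} separately. I first reduce the left-hand ratio to a Rayleigh-quotient form. For $y\in\Sp\cap\inte(K)$ with $y\ne x$, set $h=y-x$ and $u=h/\|h\|$; expanding $\|y\|^2=\|x\|^2=1$ gives $\lng x,h\rng=-\|h\|^2/2$, so $\lng x,u\rng=-\|h\|/2\to 0$ as $y\to x$. Since $\lng Ay-Ax,y-x\rng=\lng Ah,h\rng$, the ratio simplifies to $\lng Au,u\rng$. The task is therefore to identify the possible limit values of $\lng Au,u\rng$ along sequences $y_k\to x$ in $\Sp\cap K$, subject to the constraint $\lng x,u_k\rng\to 0$.

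For the upper bound $\liminf\le\min_{u\in\Sp,\,u\perp x}\lng Au,u\rng$, I exhibit an explicit geodesic. Fix a unit $u^*$ with $\lng u^*,x\rng=0$ and set $y(t)=(\cos t)x+(\sin t)u^*$; since $x\in\inte(K)$ and $y(t)\to x$, we have $y(t)\in\Sp\cap K$ for all sufficiently small $t>0$. A short expansion in $t$ (of the same flavour as those already appearing in the proof of Theorem~\ref{th:li}) shows that the ratio tends to $\lng Au^*,u^*\rng$ as $t\to 0^+$, and taking $u^*$ to be a minimiser over the compact set $\{u\in\Sp:u\perp x\}$ yields the inequality.

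For the reverse direction, let $(y_k)\subset\Sp\cap K$ be a sequence realising the $\liminf$ and set $u_k=(y_k-x)/\|y_k-x\|$. Compactness of $\Sp$ yields a subsequential limit $u^*\in\Sp$, and the identity $\lng x,u_k\rng=-\|y_k-x\|/2\to 0$ forces $\lng x,u^*\rng=0$, so $u^*$ is admissible in the right-hand minimisation. Continuity then gives $\liminf\lng Au_k,u_k\rng=\lng Au^*,u^*\rng\ge\min_{u\in\Sp,\,u\perp x}\lng Au,u\rng$, which completes the first equality.

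The second equality follows from a spectral decomposition of $M:=P_xAP_x+\lambda Q_x$ along the orthogonal splitting $\R^n=\R x\oplus x^\perp$. Since $P_xx=0$ and $Q_xx=x$, the vector $x$ is an eigenvector of $M$ with eigenvalue $\lambda$. The subspace $x^\perp$ is $M$-invariant (as $Q_x$ annihilates it and $P_x$ restricts to the identity there), and for unit $u\perp x$ a direct calculation gives $\lng Mu,u\rng=\lng AP_xu,P_xu\rng=\lng Au,u\rng$. Hence
\[
\lambda_{\min}(M)=\min\lf\{\lambda,\;\min_{u\in\Sp,\,u\perp x}\lng Au,u\rng\rg\}.
\]
With $r$ as in the statement, $\lambda=\lng Ar,r\rng\ge\min_{u\in\Sp,\,u\perp x}\lng Au,u\rng$ trivially, so the $\lambda$-term is redundant and the minimum agrees with the quantity already identified in the first equality. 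The principal point is conceptual rather than computational: the constraint $y\in\Sp$ forces the limiting direction $u^*$ to be tangential to $\Sp$ at $x$, which reduces the $\liminf$ to a Rayleigh-quotient minimisation on $x^\perp$. Positive-definiteness of $A$ plays no essential role in the argument and is, as the authors remark, a harmless normalisation afforded by Lemma~\ref{lem:lam}.
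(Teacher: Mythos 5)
Your proof is correct and, for the first equality, follows essentially the same route as the paper: the upper bound via the geodesic $y(t)=(\cos t)x+(\sin t)u^*$, and the lower bound via compactness of the normalised differences together with the identity $\lng x,u_k\rng=-\|y_k-x\|/2\to 0$. One small point in your favour: your remark that $x\in\inte(K)$ forces $y(t)\in K$ for small $t>0$ is actually needed here, since the minimiser $u^*$ over $\{u\in\Sp:\lng u,x\rng=0\}$ need not lie in $K$, whereas the paper simply cites the computation from the proof of Theorem~\ref{th:li}, which was carried out only for directions $v\in\Sp\cap K$. For the second equality you take a genuinely cleaner route: instead of the paper's case analysis on eigenvectors $w$ of $M:=P_xAP_x+\lambda Q_x$ according to whether $P_xw$ vanishes, you use that $M$ is symmetric and that $\R^n=\R x\oplus x\prp$ is an orthogonal $M$-invariant splitting, so the spectrum of $M$ is $\{\lambda\}$ together with the spectrum of $M|_{x\prp}$, and the conclusion follows once one notes $\lambda=\lng Ar,r\rng\ge\min_{u\in\Sp,\,\lng u,x\rng=0}\lng Au,u\rng$ because $r$ is itself a competitor in that minimisation. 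This also exposes, correctly, that positive definiteness of $A$ is inessential: the paper uses it only to show that $P_xAP_x$ maps $x\prp$ \emph{onto} $x\prp$, but mere invariance, which holds for any symmetric $A$, is all the spectral decomposition requires. Both arguments are sound; yours is the more economical and marginally more general.
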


\begin{proof}
	First we justify the second inequality in \eqref{eq:sd}. Let 
	$x\prp=\{u\in\R^n\,:\,\lng u,x\rng=0\}$. Please note the following
	properties of $P_x$ and $Q_x$:
	\begin{equation}\label{eq:pxqx}
		\begin{array}{r}
			P_x+Q_x=I,\quad P_x^2=P_x=P_x\tp,\quad Q_x^2=Q_x=Q_x\tp,\quad
			P_xQ_x=0,\quad P_xx=0,\quad Q_xx=x,\\
			w\in\R^n\implies P_x w=w-\lng w,x\rng x\in x\prp,\quad
			z\in x\prp\implies P_x z=z
		\end{array}
	\end{equation}
	Formula \eqref{eq:pxqx}$_7$ implies that $P_xAP_xx^\perp\subseteq x\prp$.
	Let any $z\in x\prp$. Since $A$ is positive definite, $A\inv$ is also positive definite.
	Therefore, $\lng A^{-1}x,x\rng>0$ and the vector 
	\[v:=A\inv z-\f{\lng A\inv z,x\rng}{\lng A\inv x,x\rng}A\inv x\in x\prp\]
	is well defined. By using \eqref{eq:pxqx}$_{8,5}$, we obtain
	$P_xAP_xx\prp\ni P_xAP_xv=P_xAv=z$. Hence, $x\prp\subseteq
	P_xAP_xx^\perp$. In conclusion $P_xAP_xx^\perp=x\prp$ and therefore $x\prp$ is an 
	invariant hyperplane of $P_xAP_x$. With a slight abuse of notation, we
	will identify the linear operators of $\R^n$ with their matrices with
	respect to the standard canonical basis. Then, we have 
	\[\lambda_{\min}(P_xAP_x|_{x\prp})=\min_{\substack{u\in\Sp\\\lng
	u,x\rng=0}}\lng Au,u\rng.\] To justify the second inequality in
	\eqref{eq:sd}, it remains to show that 
	\begin{equation}\label{eq:lieq}
		\lambda_{\min}(P_xAP_x+\lambda
		Q_x)=\lambda_{\min}(P_xAP_x|_{x\prp}).
	\end{equation}
	Let $w\in\R^n\setminus\{0\}$ be an eigenvector of $P_xAP_x+\lambda Q_x$
	with $P_xw\ne 0$ and corresponding eigenvalue $\mu$. Multiplying 
	the left and right hand sides of 
	\begin{equation}\label{eq:beig}
		(P_xAP_x+\lambda Q_x)w=\mu w
	\end{equation}
	by $P_x$
	and using \eqref{eq:pxqx}$_{2,4,7}$, we obtain $P_xAP_xP_xw=\mu P_xw$,
	which implies that $\mu$ is an eigenvalue of $P_xAP_x|_{x\prp}$ with
	corresponding eigenvector $P_xw\in x\prp$.
	Hence, 
	\begin{equation}\label{eq:ieig}
		\mu\ge\lambda_{\min}(P_xAP_x|_{x\prp}).
	\end{equation}
	Next, let
	$w\in\R^n\setminus\{0\}$ be an eigenvector of $P_xAP_x+\lambda Q_x$ with
	$P_x w=0$ and the corresponding eigenvalue $\mu$. Since $P_x w=0$, it
	follows form \eqref{eq:pxqx}$_7$ that $w=\lng w,x\rng x$. Hence,
	equations \eqref{eq:pxqx}$_{5,6}$ and \eqref{eq:beig} imply 
	\begin{equation}\label{eq:canc}
		\mu\lng w,x\rng=(P_xAP_x+\lambda Q_x)\lng w,x\rng=\lambda\lng
		w,x\rng.
	\end{equation}
	Note that $\lng w,x\rng\ne 0$ because otherwise $P_xw=w-\lng w,x\rng
	x=w\ne 0$ contradicts our assumption $P_x w=0$. Thus, \eqref{eq:canc},
	$v\in S\cap x\prp$ and \eqref{eq:pxqx}$_{2,8}$ implies 
	\[\mu=\lambda=\lng Ar,r\rng=\lng AP_xr,P_xr\rng=\lng P_xAP_xr,r\rng\ge
	\lambda_{\min}(P_xAP_x|_{x\prp}).\] Hence, 
	\begin{equation}\label{eq:ieig2}
		\mu\ge\lambda_{\min}(P_xAP_x|_{x\prp}).
	\end{equation}
	Inequalities \eqref{eq:ieig} and \eqref{eq:ieig2} imply that
	\begin{equation}\label{eq:2d}
		\lambda_{\min}(P_xAP_x+\lambda
		Q_x)\ge\lambda_{\min}(P_xAP_x|_{x\prp}).
	\end{equation}
	Conversely, let $a\in x^{\prp}$ be an eigenvector of $P_xAP_x|_{x\prp}$
	with corresponding eigenvalue $\mu$. Then $P_xAP_xa=\mu a$ and
	\eqref{eq:pxqx}$_{8,4}$ imply 
	\[(P_xAP_x+\lambda Q_x)a=(P_xAP_x)a+\lambda Q_x a=\mu a+\lambda
	Q_xP_xa=\mu a,\] which implies that $\mu$ is an eigenvalue of
	$P_xAP_x+\lambda Q_x$ and thus $\mu\ge\lambda_{\min}(P_xAP_x+\lambda
	Q_x)$. In particular,
	\begin{equation}\label{eq:2c}
		\lambda_{\min}(P_xAP_x|_{x\prp})\ge\lambda_{\min}(P_xAP_x+\lambda
		Q_x). 
	\end{equation}
	Inequalities \eqref{eq:2d} and \eqref{eq:2c} imply inequality
	\eqref{eq:lieq} and in conclusion \eqref{eq:lieq}$_2$ holds.
	\bigskip

	To show the first equality in \eqref{eq:sd} please note that we have
	already shown in the proof of Theorem \ref{th:li} that
	\begin{equation*}
		\liminf_{\mathclap{\substack{y\to x\\\|y\|=1\\y\in K}}}
		\f{\lng Ay-Ax,y-x\rng}{\|y-x\|^2}\le\lng Au,u\rng,	
	\end{equation*}
	for all $u\in\Sp$ with $u\perp x$, which implies
	\begin{equation*}%\label{eq:sd1le}
		\liminf_{\mathclap{\substack{y\to x\\\|y\|=1\\y\in K}}}
		\f{\lng
		Ay-Ax,y-x\rng}{\|y-x\|^2}\le\min_{\substack{u\in\Sp\\\lng
		u,x\rng=0}}\lng Au,u\rng. 	
	\end{equation*}
	It remains to show that
	\begin{equation}\label{eq:sd1ge}
		\liminf_{\mathclap{\substack{y\to x\\\|y\|=1\\y\in K}}}
		\f{\lng
		Ay-Ax,y-x\rng}{\|y-x\|^2}\ge\min_{\substack{u\in\Sp\\\lng
		u,x\rng=0}}\lng Au,u\rng. 	
	\end{equation}
	Let any $\varepsilon>0$ and $y^k\in\Sp\cap K$ be a sequence such that $y^n\ne x$ for 
	all positive integer $k$, $\lim_{k\to\infty}y^k=x$ and the sequence 
	\begin{equation}\label{eq:ytoz}
		\f{\lng Ay^k-Ax,y^k-x\rng}{\|y^k-x\|^2}
		=\lng A\lf(\f{y^k-x}{\|y^k-x\|}\rg),\f{y^k-x}{\|y^k-x\|}\rng=\lng
		Az^k,z^k\rng
	\end{equation} 
	is convergent to
	\begin{equation}\label{eq:eps}
		\liminf_{\mathclap{\substack{y\to x\\\|y\|=1\\y\in K}}}
		\f{\lng Ay-Ax,y-x\rng}{\|y-x\|^2}-\varepsilon,	
	\end{equation}
	where $z^k:=(y^k-x)/\|y^k-x\|$.
	Since $z^k\in\Sp$ and $\Sp$ is compact, it follows that
	there exists a convergent subsequence $z^{k_\ell}$ of $z^k$ such that
	\begin{equation}\label{eq:v}
		v:=\lim_{\ell\to\infty}z^{k_\ell}\in\Sp.
	\end{equation}
	We have 
	\begin{equation}\label{eq:liv}
		\lim_{\ell\to\infty}\lng Az^{k_\ell},z^{k_\ell}\rng=\lng Av,v\rng.
	\end{equation}
	On the other hand we have 
	\begin{equation*}
		\lng y^{k_\ell}+x,z^{k_\ell}\rng=\lng
		y^{k_\ell}+x,\f{y^{k_\ell}-x}{\|y^{k_\ell}-x\|}\rng
		=\f{\|y^{k_\ell}\|^2-\|x\|^2}{\|y^{k_\ell}-x\|}=0
	\end{equation*}
	Hence, 
	\begin{equation}\label{eq:vx}
		\lng v,x\rng=\f12\lim_{\ell\tp\infty}\lng
		y^{k_\ell}+x,z^{k_\ell}\rng=0
	\end{equation}   
	Formula \eqref{eq:ytoz} with $k_\ell$ replacing $k$, \eqref{eq:eps},\eqref{eq:v}, 
	\eqref{eq:liv} and
	\eqref{eq:vx} imply
	\begin{equation*}
		\liminf_{\mathclap{\substack{y\to x\\\|y\|=1\\y\in K}}}
		\f{\lng
		Ay-Ax,y-x\rng}{\|y-x\|^2}-\varepsilon=\lng Av,v\rng
		\ge\min_{\substack{u\in\Sp\\\lng u,x\rng=0}}\lng Au,u\rng, 	
	\end{equation*}
	for all $\varepsilon>0$, which implies \eqref{eq:sd1ge} and in conclusion 
	\eqref{eq:sd1ge} holds.
\end{proof}

Proposition \ref{prop:sd} and Theorem \ref{th:li} (or equation \eqref{eq:foc}) imply the following theorem for
 \( x \in \Sp \cap \inte(K) \). To extend the theorem to \( x \in \Sp \cap K \), one needs to approach \( x \) with a 
sequence of points in \( \Sp \cap \inte(K) \) and use continuity.

\begin{theorem}
	Suppose that $A$ is positive definite. The function $f$ is spherically convex if and only if
	\begin{eqnarray*}
		\f12\lng b,x\rng\le\lambda_{\min}(P_xAP_x+\lambda Q_x)-\lng Ax,x\rng,
	\end{eqnarray*}
	for all $x\in\Sp\cap K$,
	where $P_x=I-xx\tp$, $Q_x=xx\tp$ and $\lambda=\lng Ar,r\rng$, for some 
	$r\in\Sp$ with $\lng r,x\rng=0$.
\end{theorem}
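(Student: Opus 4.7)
The plan is to assemble the interior equivalence directly from Theorem \ref{th:li} and Proposition \ref{prop:sd}, and then extend it to $\Sp \cap K$ by continuity, exactly as flagged in the paragraph preceding the statement.

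For $x \in \Sp \cap \inte(K)$ I would start from Theorem \ref{th:li}, which characterises spherical convexity of $f$ by
\[\f12\lng b,x\rng \le \liminf_{\mathclap{\substack{y\to x\\\|y\|=1\\y\in K}}} \f{\lng Ay-Ax,y-x\rng}{\|y-x\|^2} - \lng Ax,x\rng.\]
Since $A$ is positive definite, Proposition \ref{prop:sd} identifies the $\liminf$ with $\lambda_{\min}(P_x A P_x + \lambda Q_x)$ for any admissible choice of $r\in\Sp$ with $\lng r,x\rng=0$ and $\lambda=\lng Ar,r\rng$; in particular the right-hand side does not depend on this choice. Substituting yields the stated inequality precisely on $\Sp \cap \inte(K)$, which gives the equivalence on the interior. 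The \emph{if} direction of the full theorem is then immediate, since the hypothesis on all of $\Sp \cap K$ covers every interior point and the interior characterisation already implies spherical convexity.

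For the \emph{only if} direction at a boundary point $x \in \Sp \cap K \setminus \inte(K)$, I would pick $x^k \in \Sp \cap \inte(K)$ with $x^k \to x$ (possible since $K$ is proper, so $\Sp\cap\inte(K)$ is dense in $\Sp\cap K$), write the inequality already established at each $x^k$, and pass to the limit. The terms $\f12 \lng b, x^k\rng$ and $\lng A x^k, x^k\rng$ are obviously continuous, so the whole step reduces to the continuity in $x$ of $x \mapsto \lambda_{\min}(P_x A P_x + \lambda(x) Q_x)$.

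This continuity is the main delicate point, since $\lambda(x)=\lng A r(x), r(x)\rng$ appears to depend on a choice of $r(x)\perp x$. To dispose of this, I would fix once and for all a constant $\mu > \lambda_{\max}(A)$ and observe, using the spectral analysis in the proof of Proposition \ref{prop:sd}, that the spectrum of $P_x A P_x + \mu Q_x$ consists of $\mu$ (on $\mathrm{span}\{x\}$) together with the spectrum of $P_x A P_x|_{x\prp}$, all of whose eigenvalues are at most $\lambda_{\max}(A)<\mu$. Hence
\[\lambda_{\min}(P_x A P_x + \lambda(x) Q_x)=\lambda_{\min}(P_x A P_x + \mu Q_x)=\min_{u\in\Sp,\,\lng u,x\rng=0}\lng Au,u\rng,\]
which is independent of $r(x)$. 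Since $x\mapsto P_x A P_x + \mu Q_x$ is continuous and the smallest eigenvalue of a continuously varying symmetric matrix is continuous, the right-hand side of the stated inequality is a continuous function of $x$. Passing to the limit $k\to\infty$ then yields the inequality at $x$, completing the extension to the boundary.
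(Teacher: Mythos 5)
Your proposal is correct and follows essentially the same route as the paper, which simply states that the theorem follows from Theorem \ref{th:li} and Proposition \ref{prop:sd} on $\Sp\cap\inte(K)$ and is extended to $\Sp\cap K$ by approximating with interior points and using continuity. Your additional observation that $\lambda_{\min}(P_xAP_x+\lambda(x)Q_x)=\min_{u\in\Sp,\,\lng u,x\rng=0}\lng Au,u\rng$ is independent of the choice of $r(x)$ and varies continuously in $x$ is a legitimate filling-in of a detail the paper leaves implicit, not a departure from its argument.
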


The following proposition is related to item \eqref{lc} of Theorem \ref{th:lt}.
It shows that the condition \(\langle b, u+v\rangle\leq 0\) for arbitrary perpendicular unit 
vectors with nonnegative components is no more general than the same condition
with two different vectors from the standard basis.

\begin{proposition}
	Let $K=\R^n_+$, $b\in\R^n$ and $u,v\in\Sp\cap\R^n_+$ such that $\lng u,v\rng=0$. If $b_i+b_j\le 0$, for
	all $i,j\in [n]$ with $i\ne j$, then $\lng b,u+v\rng\le 0$.
\end{proposition}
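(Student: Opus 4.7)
The plan is to first exploit the pairwise condition on $b$ to reduce to an almost trivial situation, and then to use the unit-norm constraint together with a comparison between $v_j$ and $v_j^2$ on $[0,1]$.

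First I would observe that the hypothesis $b_i+b_j\le 0$ for all $i\ne j$ immediately forces \emph{at most one} entry of $b$ to be strictly positive, since any two strictly positive entries would have a positive sum. If every $b_i\le 0$, then $\lng b,u+v\rng\le 0$ is immediate, because $u,v\in\R^n_+$. So the only interesting case is when exactly one entry, say $b_1$, is positive, in which case the hypothesis gives $b_j\le -b_1<0$ for all $j\ge 2$.

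Next I would use the orthogonality $\lng u,v\rng=0$ together with $u,v\in\R^n_+$ to conclude that $u_iv_i=0$ for every $i$; in particular $u_1v_1=0$, and by symmetry of the claim in $u,v$ I may assume $v_1=0$. Splitting the inner product and using $b_j\le -b_1$ for $j\ge 2$,
\begin{equation*}
\lng b,u+v\rng \;=\; b_1 u_1 + \sum_{j\ge 2} b_j(u_j+v_j) \;\le\; b_1\lf(u_1 - \sum_{j\ge 2}(u_j+v_j)\rg).
\end{equation*}
Since $b_1>0$, it therefore suffices to show $u_1\le\sum_{j\ge 2}(u_j+v_j)$.

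The key step (and the one I expect to be the main obstacle to spot, though it is elementary once noticed) is that $v_1=0$ and $\|v\|=1$ yield $\sum_{j\ge 2}v_j^2=1$, while $v_j\in[0,1]$ gives $v_j\ge v_j^2$, so $\sum_{j\ge 2}v_j\ge 1$. Combined with $u_1\le\|u\|=1$ and $u_j\ge 0$, this delivers
\begin{equation*}
u_1\;\le\;1\;\le\;\sum_{j\ge 2}v_j\;\le\;\sum_{j\ge 2}(u_j+v_j),
\end{equation*}
which completes the proof.
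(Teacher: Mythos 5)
Your proof is correct and takes essentially the same route as the paper's: reduce to the case of a single positive entry $b_1$, use the disjoint supports forced by $\lng u,v\rng=0$ and $u,v\ge 0$ to assume $v_1=0$, and combine $u_1\le 1$ with $\sum_{j\ge 2}v_j\ge\sum_{j\ge 2}v_j^2=1$. The only cosmetic difference is that you bound each nonpositive entry by $-b_1$ and factor out $b_1$, whereas the paper bounds them by $\max_{j\ne 1}b_j$ and finishes with $b_1+\max_{j\ne 1}b_j\le 0$.
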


\begin{proof}
	If $b_i\le 0$, for all $i\in [n]$, then the statement is trivial. Suppose that $b$ has a 
	positive coordinate. By permutating the coordinates, we can suppose without loss of generality 
	that $b_1>0$, $b_i\le 0$, for all $i\in [n]\setminus\{1\}$, $u=\sum_{i=1}^k u_ie^i$ and
	$v=\sum_{j=k+1}^n v_je^j$. Then,
	 \begin{eqnarray*}
		 \langle b,u+v\rangle=b_1u_1+\sum_{i=2}^k b_iu_i+\sum_{j=k+1}^n b_jv_j\le
		b_1u_1+\sum_{j=k+1}^n b_jv_j\le b_1+\sum_{j=k+1}^n b_jv_j\\
		\le b_1+\max(b_{k+1},\dots,b_n)\sum_{j=k+1}^n v_j
		\le b_1+\max(b_{k+1},\dots,b_n)\sum_{j=k+1}^n v_j^2=b_1+\max(b_{k+1},\dots,b_n)\\
		\le 0.
	\end{eqnarray*}
\end{proof}

Note that, according to item \eqref{lc} of Theorem \ref{th:lt}, it is easy to prove that if \( f \) is spherically convex, 
then at most one component of \( b \) can be positive. The following proposition demonstrates that if \( f \) is spherically convex, then the components of \( b \) corresponding to the 
non-minimal elements of the diagonal of \( A \) are nonpositive. Additionally, it provides an upper bound for each 
component of \( b \) corresponding to a minimal element of the diagonal of \( A \). Some specific cases are also presented.   

\begin{proposition}\label{prop:b}
	Let $K=\R^n_+$ and any $m\in\argmin\{a_{\ell\ell}:\ell\in [n]\}$. If $f$ is spherically convex, then $b_i\le 0$, 
	for all $i\in [n]\setminus\{m\}$ and  
	\begin{equation}\label{eq:bm}
		b_m\le-\max\lf\{b_i+4a_{mi}^+:i\in [n]\setminus\{m\}\rg\}.
	\end{equation} 
	In particular, if $a_{11}=\dots=a_{nn}$ or $4a_{mi}^+\ge -b_i$, for some 
	$m\in\argmin\{a_{\ell\ell}:\ell\in [n]\}$ and some $i\in [n]\setminus\{m\}$, then 
	$b_i\le 0$ for all $i\in [n]$. 
\end{proposition}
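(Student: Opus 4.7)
The plan is to assemble both parts of the conclusion from specific instances of the first-order inequality \eqref{eq:foc}, most of which have already been extracted in the proof of Theorem \ref{th:lt}.

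First I would establish that $b_i\le 0$ for every $i\in[n]\setminus\{m\}$. This is essentially a restatement of inequality \eqref{eq:ij}, derived in the proof of Theorem \ref{th:lt}(\ref{lc}) by taking $x=e^i,\ y=e^j$ in \eqref{eq:foc}: it gives $2(a_{ii}-a_{jj})\ge b_j$ for all $i\ne j$. Setting $i=m$ yields $b_j\le 2(a_{mm}-a_{jj})$, and since $m\in\argmin\{a_{\ell\ell}\}$ we have $a_{mm}\le a_{jj}$, hence $b_j\le 0$ for all $j\ne m$.

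Next I would establish the upper bound \eqref{eq:bm} on $b_m$. The key step is to apply \eqref{eq:foc} to the pair
\[
u=\tfrac{1}{\sqrt 2}(e^m-e^i),\qquad v=\tfrac{1}{\sqrt 2}(e^m+e^i),
\]
for any $i\in[n]\setminus\{m\}$, which is exactly the substitution used in the proof of Theorem \ref{th:lt}(ii). These are orthogonal unit vectors and $v\in\Sp\cap\R^n_+$. A short computation gives $\lng Au,u\rng-\lng Av,v\rng=-2a_{mi}$ and $\lng b,v\rng=(b_m+b_i)/\sqrt 2$, so \eqref{eq:foc} becomes $b_m+b_i\le -4\sqrt 2\,a_{mi}$. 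Combining with the already established $b_m+b_i\le 0$ (from \eqref{eq:sij} applied to $i\ne m$) yields $b_m+b_i\le-4\sqrt 2\,a_{mi}^+\le-4a_{mi}^+$, i.e.\ $b_m\le-(b_i+4a_{mi}^+)$ for every $i\ne m$. Taking the infimum over such $i$ (equivalently, the negative of a maximum) gives \eqref{eq:bm}.

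For the special cases: if $a_{11}=\cdots=a_{nn}$, then every index lies in $\argmin\{a_{\ell\ell}\}$, so for any fixed $i$ I can apply the first part with some $m\in\argmin\{a_{\ell\ell}\}\setminus\{i\}$ (possible since $n\ge 3$) to conclude $b_i\le 0$. In the second special case, if $4a_{mi}^+\ge -b_i$ for some $m\in\argmin\{a_{\ell\ell}\}$ and some $i\ne m$, then $b_i+4a_{mi}^+\ge 0$, so the maximum on the right-hand side of \eqref{eq:bm} is nonnegative, forcing $b_m\le 0$; together with the first part of the proposition this gives $b_j\le 0$ for all $j\in[n]$.

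There is no substantive obstacle here: the entire argument is a careful bookkeeping exercise that combines the two specific instances of \eqref{eq:foc} (the axis-pair $(e^m,e^i)$ and the rotated pair $(u,v)$ above) which the earlier parts of the paper have already identified as the correct test vectors. The only mild subtlety is invoking \eqref{eq:sij} to convert $a_{mi}$ into $a_{mi}^+$ in the case $a_{mi}<0$.
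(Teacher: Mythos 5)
Your proof is correct and follows essentially the same route as the paper: the paper derives $b_i\le 0$ for $i\ne m$ from \eqref{eq:foc} with $u=e^m$, $v=e^i$, and obtains \eqref{eq:bm} by citing \eqref{eq:sup} (whose proof uses exactly your rotated pair $u=\tfrac{1}{\sqrt2}(e^m-e^i)$, $v=\tfrac{1}{\sqrt2}(e^m+e^i)$ together with \eqref{eq:sij}), so you have merely unpacked that citation. Your explicit treatment of the two ``in particular'' cases, which the paper's proof leaves implicit, is also correct.
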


\begin{proof}
	The inequality $b_i\le 0$ for any $i\in [n]\setminus\{m\}$ follows from inequality \eqref{eq:foc}
	with $u=e^j$ and $v=e^i$, for any $i,j\in [n]$ with $i\ne j$. Inequality \eqref{eq:bm} 
	follows from inequality \eqref{eq:sup}.
\end{proof}

The following proposition shows that for a spherically convex \( f \), if we delete the row and column of \( A \) 
corresponding to any of its minimal diagonal elements and impose that the eigenvalue of the resulting matrix is less than
the corresponding diagonal element, then all components of \( b \) are nonpositive.

\begin{proposition}
	Let $K=\R^n_+$, and for any $k\in [n]$ denote 
	$A_{-k}=(a_{ij})_{i,j\in [n]\setminus\{k\}}$. If 
	$\lambda_{\min}(A_{-m})\le a_{mm}$, for all 
	$m\in\argmin\{a_{\ell\ell}:\ell\in [n]\}$ and  
	and $f$ is spherically convex, then $b_i\le 0$, for all $i\in [n]$. \end{proposition}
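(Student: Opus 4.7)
The plan is to combine Proposition \ref{prop:b} with a carefully chosen instance of the first-order inequality \eqref{eq:foc}. Proposition \ref{prop:b}, applied to any single $m\in\argmin\{a_{\ell\ell}:\ell\in [n]\}$, already yields $b_i\le 0$ for every $i\in [n]\setminus\{m\}$. Hence the only thing left to verify is that $b_m\le 0$ for each $m\in\argmin\{a_{\ell\ell}:\ell\in [n]\}$; the extra spectral hypothesis on $A_{-m}$ is exactly what will let us do this.

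For a fixed $m\in\argmin\{a_{\ell\ell}:\ell\in [n]\}$, I would set $v=e^m$, which lies in $\Sp\cap K$ and satisfies $\lng Av,v\rng=a_{mm}$. To produce a companion unit vector $u\in\Sp$ with $\lng u,v\rng=0$, I would take a unit eigenvector $w\in\R^{n-1}$ of the symmetric matrix $A_{-m}$ associated with its smallest eigenvalue $\lambda_{\min}(A_{-m})$, and define $u\in\R^n$ by placing the entries of $w$ into the coordinates indexed by $[n]\setminus\{m\}$ and setting $u_m=0$. Then $\|u\|=1$, $\lng u,e^m\rng=0$, and a direct computation shows
\[
\lng Au,u\rng=\lng A_{-m}w,w\rng=\lambda_{\min}(A_{-m}).
\]

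Substituting this $u$ and $v=e^m$ into inequality \eqref{eq:foc} yields
\[
\lambda_{\min}(A_{-m})-a_{mm}\ge\tfrac12\,b_m,
\]
so
\[
b_m\le 2\bigl(\lambda_{\min}(A_{-m})-a_{mm}\bigr)\le 0,
\]
where the last inequality is precisely the hypothesis $\lambda_{\min}(A_{-m})\le a_{mm}$. Since this argument works for every $m\in\argmin\{a_{\ell\ell}:\ell\in[n]\}$, and Proposition \ref{prop:b} handles the remaining indices, we conclude $b_i\le 0$ for all $i\in [n]$.

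The whole argument is short and I do not expect a real obstacle; the only subtlety is to notice that one must verify the inequality $b_m\le 0$ individually for each element of $\argmin\{a_{\ell\ell}:\ell\in[n]\}$ (which is why the hypothesis is stated for all such $m$), while a single application of Proposition \ref{prop:b} suffices for the coordinates outside a chosen $\argmin$-index.
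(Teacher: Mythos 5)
Your proof is correct and follows essentially the same route as the paper: the paper also takes $v=e^m$ and $u=\argmin\{\lng Ax,x\rng:x\in\Sp,\ x\tp e^m=0\}$ (which is exactly your padded eigenvector of $A_{-m}$), applies \eqref{eq:foc} to get $b_m\le 2(\lambda_{\min}(A_{-m})-a_{mm})\le 0$, and invokes Proposition \ref{prop:b} for the remaining indices. No gaps.
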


\begin{proof}
	Let any $m\in\argmin\{a_{\ell\ell}:\ell\in [n]\}$. Take 
	$u=\argmin\{\lng Ax,x\rng:x\in\Sp,x\tp e^m=0\}$ and $v=e^m$. Then, 
	$\lng Au,u\rng=\lambda_{\min}(A_{-m})$. Hence, inequality \eqref{eq:foc} implies that
	\[0\ge\lambda_{\min}(A_{-m})-a_{mm}=\lng Au,u\rng-\lng Av,v\rng\ge \f12\lng b,v\rng=b^m.\]
	The inequalities $b_i\le 0$, for any $i\in [n]\setminus\{m\}$ follows from proposition
	\ref{prop:b}.
\end{proof}

The following theorem provides sufficient conditions for \( f \) to be spherically convex when \( K \) is the 
nonnegative orthant, based on the copositivity of certain matrices dependent on \( A \). Additionally, item (iii) 
demonstrates that the inequality in item (i) is implied by item (ii).

\begin{theorem}\label{th:suffd}
	Let $K=\R^n_+$. Consider the following statements
	\begin{enumerate}[(i)]
		\item\label{oc} The matrix $\diag^2(A)-A$ is copositive and 
			\(b_i\le 2[\lambda_{\min}(A)-a_{ii}],\) for all $i\in [n]$.
		\item\label{cp} The matrix $2[\lm(A)I-A]-\diag(b)$ is copositive and $b_i\le 0$, for all
			$i\in [n]$.
		\item\label{fi} The inequality \(b_i\le 2\lf[\lambda_{\min}(A)-a_{ii}\rg]\) holds for 
			all $i\in [n]$.
		\item\label{sc} The function $f$ is sperically convex.
	\end{enumerate}
	Then, we have the following implications: 
	\eqref{cp}$\implies$\eqref{fi} and \eqref{oc}$\implies$\eqref{cp}$\implies$\eqref{sc}. 
\end{theorem}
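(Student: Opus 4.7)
The plan is to establish the three implications \eqref{cp}$\implies$\eqref{fi}, \eqref{oc}$\implies$\eqref{cp}, and \eqref{cp}$\implies$\eqref{sc} separately, using the characterization \eqref{eq:foc} from Proposition~\ref{prop:FerreiraNemeth2019} for the spherical-convexity implication. Throughout I would work with the matrix $M:=2[\lm(A)I-A]-\diag(b)$ appearing in~\eqref{cp}.

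For \eqref{cp}$\implies$\eqref{fi} I would simply test copositivity on each standard basis vector: since $\lng Me^i,e^i\rng=2\lm(A)-2a_{ii}-b_i\ge 0$, rearranging gives $b_i\le 2[\lm(A)-a_{ii}]$, which is exactly~\eqref{fi}. For \eqref{oc}$\implies$\eqref{cp}, note first that $\lm(A)\le a_{ii}=\lng Ae^i,e^i\rng$ for every $i$, so the bound $b_i\le 2[\lm(A)-a_{ii}]$ automatically forces $b_i\le 0$. For the copositivity of $M$, I would take any $v\in\R^n_+$ and expand
\[\lng Mv,v\rng=\sum_{i=1}^n(2\lm(A)-b_i)v_i^2-2\lng Av,v\rng.\]
Applying $2\lm(A)-b_i\ge 2a_{ii}$ componentwise yields
\[\lng Mv,v\rng\ge 2\sum_{i=1}^n a_{ii}v_i^2-2\lng Av,v\rng=2\lng[\diag^2(A)-A]v,v\rng,\]
which is nonnegative by the copositivity hypothesis in~\eqref{oc}.

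The most delicate implication is \eqref{cp}$\implies$\eqref{sc}, which I would verify via~\eqref{eq:foc}. Pick any $u\in\Sp$ and $v\in\Sp\cap\R^n_+$ with $\lng u,v\rng=0$. Since $A$ is symmetric, $\lng Au,u\rng\ge\lm(A)\|u\|^2=\lm(A)$, and the copositivity of $M$ applied to $v\ge 0$ (using $\|v\|=1$) gives
\[2\lm(A)-2\lng Av,v\rng\ge\lng\diag(b)v,v\rng=\sum_{i=1}^n b_iv_i^2.\]
The crux is now a sign-sensitive replacement of the diagonal quadratic form by the linear form $\lng b,v\rng$: for $v\in\Sp\cap\R^n_+$ we have $v_i\in[0,1]$, hence $v_i^2-v_i\le 0$, and combined with $b_i\le 0$ this gives $b_i(v_i^2-v_i)\ge 0$, i.e.\ $\sum_i b_iv_i^2\ge\lng b,v\rng$. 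Chaining the three estimates produces $2\lng Au,u\rng-2\lng Av,v\rng\ge\lng b,v\rng$, which is precisely~\eqref{eq:foc}. This last transfer is the main obstacle and is exactly where the sign condition $b_i\le 0$ built into \eqref{cp} becomes indispensable; without it one could not pass from the diagonal quadratic estimate supplied by copositivity to the linear form needed in~\eqref{eq:foc}.
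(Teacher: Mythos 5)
Your proof is correct and follows essentially the same route as the paper: testing copositivity on $e^i$ for \eqref{cp}$\implies$\eqref{fi}, decomposing $2[\lm(A)I-A]-\diag(b)$ as $2[\diag^2(A)-A]$ plus a nonnegative diagonal matrix for \eqref{oc}$\implies$\eqref{cp}, and chaining $\lng Au,u\rng\ge\lm(A)$, copositivity at $v$, and the transfer $\sum_i b_iv_i^2\ge\lng b,v\rng$ (via $b_i\le 0$ and $0\le v_i\le 1$) to verify \eqref{eq:foc}. The only cosmetic difference is that you write the estimates componentwise where the paper phrases them in matrix--cone language ($-b\in K$, $v-v^2\in K$).
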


\begin{proof}
	Let $B:=\diag^2(A)-A$ and $C:=2[\lm(A)I-A]-\diag(b)$. 
	\begin{description}
		\item \eqref{cp}$\implies$\eqref{fi}: Let any $i\in [n]$.
			Since $C$ is copositive and $e^i\in K$, we have 
			$\lng Ce^i,e^i\rng=2[\lm(A)-a_{ii}]-b_i\ge 0$, which implies  
			\(b_i\le 2[\lambda_{\min}(A)-a_{ii}].\)
		\item \eqref{oc}$\implies$\eqref{cp}: Since \(b_i\le 2[\lambda_{\min}(A)-a_{ii}],\) we 
			have that $2\lm(A)I-2\diag^2(A)-\diag(b)$ is a diagonal matrix with nonnegative 
			elements, and hence copositive. Thus, we have that 
			$C=2B+2\lm(A)I-2\diag^2(A)-\diag(b)$ is copositive, because it is the sum of two
			copositive matrices. Since $e^i\in\Sp$, we also have 
			$b_i\le 2[\lambda_{\min}(A)-a_{ii}]
			=\min_{z\in\Sp}\lng Az,z\rng-\lng Ae^i,e^i\rng\le 0$.
		\item \eqref{cp}$\implies$\eqref{sc}: Let any $u,v\in\Sp$ such that $v\in K$.
			Denote by $v^2\in K^{\*}$ the vector of coordinates $(v^i)^2$ and by 
			$u^2\in K^{\*}$ the vector of coordinates $(u_i)^2$. 
			Since $v\in\Sp\cap K$, we have $v-v^2\in K=K^{\*}$.	Thus, the 
			copositivity of $C$, $-b\in K=K^{\*}$ and $v-v^2\in K^{\*}$ imply
			\begin{eqnarray*}
				2[\lng Au,u\rng-\lng Av,v\rng]=2[\lng Au,u\rng-\lm(A)]
				-2[\lng Av,v\rng-\lm(A)]\\
				=2[\lng Au,u\rng-\min_{z\in\Sp}\lng Az,z\rng]-2[\lng Av,v\rng-\lm(A)]
				\ge\lng 2[\lm(A)I-A]v,v\rng\\=\lng Cv,v\rng
				+\lng\diag(b)v,v\rng\ge\lng\diag(b)v,v\rng=\lng b,v^2\rng
				=\lng -b,v-v^2\rng+\lng b,v\rng\ge\lng b,v\rng.
			\end{eqnarray*}
			Hence, the inequality \eqref{eq:foc} holds and therefore $f$ is spherically 
			convex
	\end{description}
\end{proof}

The following corollary provides a more specific certificate for the spherical convexity of $f$ when $A$ is a $Z$-matrix. 

\begin{corollary}\label{cor:sd-1}
	Let $K=\R^n_+$. Suppose that $A$ is 
	a $Z$-matrix and \[b_i\le 2\lf[\lambda_{\min}(A)-a_{ii}\rg],\] for all
	$i\in[n]$. Then, $f$ is spherically convex.
\end{corollary}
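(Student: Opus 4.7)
The plan is to deduce the corollary directly from Theorem \ref{th:suffd} by verifying its hypothesis \eqref{oc}, which then yields spherical convexity through the chain \eqref{oc}$\Rightarrow$\eqref{cp}$\Rightarrow$\eqref{sc}. The hypothesis on $b$ is assumed outright, so only the copositivity of $M := \diag^2(A) - A$ needs checking.

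The key observation is that $\diag^2(A)$ is the diagonal matrix with entries $a_{ii}$, so $M$ has $(i,j)$-entry equal to $a_{ii}\delta_{ij} - a_{ij}$. This means $M$ has zero diagonal and off-diagonal entries $-a_{ij}$ for $i\ne j$. Because $A$ is a $Z$-matrix, $a_{ij}\le 0$ whenever $i\ne j$, and hence $-a_{ij}\ge 0$. Consequently $M$ is an entrywise nonnegative symmetric matrix.

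Any entrywise nonnegative symmetric matrix is copositive: for every $x\in\R^n_+$,
\[\lng Mx,x\rng=\sum_{i,j\in[n]}(-a_{ij}+a_{ii}\delta_{ij})x_ix_j\ge 0,\]
since each summand is the product of nonnegative quantities. Thus $\diag^2(A)-A$ is copositive, and combined with the given bound $b_i\le 2[\lambda_{\min}(A)-a_{ii}]$, condition \eqref{oc} of Theorem \ref{th:suffd} is satisfied. Invoking the implications established there immediately gives that $f$ is spherically convex.

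There is no genuine obstacle: the proof is essentially a one-line reduction once one recognizes that the $Z$-matrix property is precisely what turns $\diag^2(A)-A$ into a nonnegative matrix and therefore into a trivially copositive one.
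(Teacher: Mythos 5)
Your proof is correct and follows essentially the same route as the paper: both verify condition \eqref{oc} of Theorem \ref{th:suffd} by observing that the $Z$-matrix property makes $\diag^2(A)-A$ entrywise nonnegative (zero diagonal, off-diagonal entries $-a_{ij}\ge 0$) and hence copositive. Your write-up merely spells out the entrywise computation that the paper leaves implicit.
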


\begin{proof}
	It follows from Theorem \ref{th:suffd} \eqref{oc}$\implies$\eqref{sc} 
	because $\diag^2(A)-A$ is a matrix with nonnegative
	elements and hence it is copositive.
\end{proof}

The next corollary is a specialization of Corollary \ref{cor:sd-1} for diagonal matrices.

\begin{corollary}\label{cor:suffd}
	Let $K=\R^n_+$, $d\in\R^n$ and $A=\diag(d)$ be a diagonal matrix. If 
	\[b_i\le 2\lf[\min(d_1,\dots,d_n)-d_i\rg]\] for all $i\in [n]$, then $f$ is spherically convex. 
\end{corollary}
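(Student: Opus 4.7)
The plan is to derive this corollary as a direct specialization of Corollary \ref{cor:sd-1}, since a diagonal matrix is the simplest example of a $Z$-matrix. The main observation to record is that when $A=\diag(d)$, the diagonal entries are $a_{ii}=d_i$, the off-diagonal entries are all zero (in particular, nonpositive, so $A$ is a $Z$-matrix), and the eigenvalues of $A$ are precisely $d_1,\dots,d_n$, so $\lambda_{\min}(A)=\min(d_1,\dots,d_n)$.

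First I would state that $A=\diag(d)$ is a $Z$-matrix, because a matrix is a $Z$-matrix when its off-diagonal entries are nonpositive and $A$ has all off-diagonal entries equal to zero. Next I would identify $\lambda_{\min}(A)=\min(d_1,\dots,d_n)$ and $a_{ii}=d_i$, which rewrites the hypothesis $b_i\le 2[\min(d_1,\dots,d_n)-d_i]$ in the form $b_i\le 2[\lambda_{\min}(A)-a_{ii}]$ required by Corollary \ref{cor:sd-1}.

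Finally, I would invoke Corollary \ref{cor:sd-1} to conclude that $f$ is spherically convex. The proof is essentially a one-line substitution, so there is no real obstacle; the only thing to be slightly careful about is making the identifications $\lambda_{\min}(A)=\min(d_1,\dots,d_n)$ and $a_{ii}=d_i$ explicit, so the reader sees without effort that the hypothesis of Corollary \ref{cor:sd-1} is verbatim the hypothesis assumed here.
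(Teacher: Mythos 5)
Your proposal is correct and follows exactly the paper's own proof: the paper likewise deduces the corollary from Corollary \ref{cor:sd-1} by noting that a diagonal matrix is a $Z$-matrix and that $\lambda_{\min}(A)=\min(d_1,\dots,d_n)$. Your version merely spells out the identifications $a_{ii}=d_i$ a little more explicitly, which is fine.
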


\begin{proof}
	It follows from Corollary \ref{cor:sd-1} because
	$A$ is a $Z$-matrix and $\lambda_{min}(A)=\min(d_1,\dots,d_n)$.   
\end{proof}

The next theorem provides a necessary and sufficient condition for the spherical convexity of $f$ when $A$ contains at least
two minimal diagonal elements. 

\begin{theorem}\label{th:iffdiag}
	Let $K=\R^n_+$, $d\in\R^n$ and $A=\diag(d)$ be a diagonal matrix such that $
	\exists i_0,j_0\in\argmin\{d_i:i\in [n]\}$ with $i_0\ne j_0$. Then, $f$ is spherically convex 
	if and only if \[b_i\le 2\lf[\min(d_1,\dots,d_n)-d_i\rg]\] for all $i\in [n]$. 
\end{theorem}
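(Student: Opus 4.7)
The plan is to observe that the sufficiency direction is already handled: it is exactly Corollary \ref{cor:suffd} (the hypothesis on $A$ being diagonal is unchanged, and the assumption of two distinct minimizers plays no role for sufficiency). So the entire work lies in the necessity direction.

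For necessity, assume $f$ is spherically convex and set $\delta = \min(d_1,\dots,d_n) = d_{i_0} = d_{j_0}$ with $i_0 \ne j_0$. The goal is to feed carefully chosen standard basis vectors into the first-order criterion \eqref{eq:foc}, namely
\[
\lng Au,u\rng - \lng Av,v\rng \ge \tfrac{1}{2}\lng b,v\rng,
\]
valid for all $u\in\Sp$, $v\in\Sp\cap K$ with $\lng u,v\rng = 0$. Because $A = \diag(d)$, one has $\lng Ae^k, e^k\rng = d_k$, and distinct standard basis vectors are orthogonal and lie in $K = \R^n_+$, so they are legitimate choices.

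The key trick is that having two distinct indices $i_0,j_0$ realizing the minimum $\delta$ guarantees that, for every $i\in[n]$, we can pick $m(i)\in\{i_0,j_0\}$ with $m(i)\ne i$. Setting $u = e^{m(i)}$ and $v = e^i$ in \eqref{eq:foc} yields
\[
\delta - d_i \;=\; \lng Ae^{m(i)}, e^{m(i)}\rng - \lng Ae^i, e^i\rng \;\ge\; \tfrac{1}{2}\lng b, e^i\rng \;=\; \tfrac{1}{2}b_i,
\]
which rearranges to $b_i \le 2[\delta - d_i]$, as required. This covers all $i\in[n]$ uniformly, including $i\in\{i_0,j_0\}$, where the bound collapses to $b_i \le 0$.

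There is no real obstacle here; the only conceptual point worth highlighting is exactly why the hypothesis of two minimizers is indispensable. Without it, when $i$ is the unique minimizer, no standard basis vector $e^{m}$ with $m\ne i$ achieves $d_m = \delta$, so the bound extractable from \eqref{eq:foc} degrades to $b_i \le 2[d_{\text{second-min}} - d_i]$, strictly weaker than the claimed inequality. Thus the assumption $i_0 \ne j_0$ is precisely what makes the necessary bound match the sufficient one from Corollary \ref{cor:suffd}, closing the characterization.
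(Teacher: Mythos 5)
Your proof is correct and follows essentially the same route as the paper: the converse is delegated to Corollary \ref{cor:suffd}, and necessity is obtained by choosing a minimizer index distinct from $i$ (which the two-minimizer hypothesis guarantees) and applying \eqref{eq:foc} with $u=e^{m(i)}$, $v=e^i$. Your added remark on why the two-minimizer hypothesis is indispensable is a nice clarification but not a departure from the paper's argument.
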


\begin{proof}
	Suppose that $f$ is spherically convex and let any $i\in [n]$. Choose 
	any \[i_0\in\argmin\lf\{d_i:i\in [n]\rg\}\] with 
	$i_0\ne i$. Let $u=e^{i_0}$ and $v=e^i$. Then inequality \eqref{eq:foc} 
	implies $2\lf(d_{i_0}-d_i\rg)\ge b_i$. Hence, \[b_i\le 2\lf[\min(d_1,\dots,d_n)-d_i\rg].\]
	The converse follows from Corollary \ref{cor:suffd}. 
\end{proof}

The following lemma has a crucial role in the proof of Lemma \ref{lem:sq}. 

\begin{lemma}\label{lem:i}
	Let $a,u,v\in\R^n$ such that $\|a\|=\|u\|=\|v\|=1$ and $\lng u,v\rng=0$.
	Then, 
	\begin{equation*}
		\lng a,u\rng^2+\lng a,v\rng^2\le 1,
	\end{equation*}
\end{lemma}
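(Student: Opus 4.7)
The claim is a direct instance of Bessel's inequality for the orthonormal set $\{u,v\}$, so the plan is to reduce it to a Parseval-type identity. The key observation is that $u$ and $v$ form an orthonormal pair in $\mathbb{R}^n$, so I would extend $\{u,v\}$ to a full orthonormal basis $\{u, v, w_3, \dots, w_n\}$ of $\mathbb{R}^n$ (this is possible because $n \ge 3$ and any orthonormal system can be completed via Gram--Schmidt).

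Next, I would expand $a$ in this orthonormal basis:
\[
a = \lng a,u\rng u + \lng a,v\rng v + \sum_{i=3}^n \lng a,w_i\rng w_i,
\]
and then take squared norms using Parseval's identity:
\[
1 = \|a\|^2 = \lng a,u\rng^2 + \lng a,v\rng^2 + \sum_{i=3}^n \lng a,w_i\rng^2.
\]
Since the last sum is nonnegative, the desired inequality $\lng a,u\rng^2 + \lng a,v\rng^2 \le 1$ follows immediately.

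There is essentially no obstacle here; the only minor point is invoking the completion of an orthonormal system to an orthonormal basis, which is a standard linear algebra fact. An equivalent, slightly more self-contained route would be to consider the orthogonal projection $P = uu\tp + vv\tp$ onto the plane spanned by $u,v$. Then $\|Pa\|^2 = \lng a,u\rng^2 + \lng a,v\rng^2$, and since $P$ is an orthogonal projection, $\|Pa\| \le \|a\| = 1$, which gives the result. I would probably present whichever version is most in keeping with the style of the paper — most likely the orthonormal-basis expansion, since it is the most elementary.
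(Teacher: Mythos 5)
Your proof is correct, but it takes a genuinely different route from the paper's. You treat $\{u,v\}$ as an orthonormal pair, complete it to an orthonormal basis (or equivalently use the orthogonal projection $P=uu\tp+vv\tp$), and read the inequality off Parseval's identity for $a$: the quantity $\lng a,u\rng^2+\lng a,v\rng^2$ is the squared norm of the projection of $a$ onto $\mathrm{span}\{u,v\}$, hence at most $\|a\|^2=1$. The paper instead decomposes $u$ and $v$ relative to $a$, writing $u=(\cos\theta)a+(\sin\theta)b$ and $v=(\cos\varphi)a+(\sin\varphi)c$ with $b,c$ unit vectors orthogonal to $a$; the orthogonality $\lng u,v\rng=0$ then yields $\cot\theta\cot\varphi=-\lng b,c\rng\in[-1,1]$, and the bound follows from the elementary inequality $2-\tfrac{1}{1+\cot^2\theta}-\tfrac{1}{1+\cot^2\varphi}\le 1$ whenever $\cot^2\theta\cot^2\varphi\le 1$, after separately disposing of the degenerate cases $a=\pm u$ or $a=\pm v$. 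Your argument is shorter, avoids the case split and the trigonometric bookkeeping, and makes the geometric content (this is just Bessel's inequality for the orthonormal system $\{u,v\}$) transparent; it also works verbatim for any $n\ge 2$. The paper's computation buys nothing extra here beyond being self-contained at the level of plane trigonometry, so your version is a clean and arguably preferable replacement.
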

\begin{proof}
	Let $\theta$ be the angle between $u$ and $a$, and $\varphi$ be the 
	angle between $v$ and $a$. If $a=\pm u$ or $a=\pm v$, then 
	$\lng a,u\rng^2+\lng a,v\rng^2=1$. Suppose that $a\ne\pm u$ and 
	$a\ne\pm v$. We have $u=(\cos\theta)a+(\sin\theta)b$ and 
	$u=(\cos\varphi)a+(\sin\varphi)c$, where $\|b\|=\|c\|=1$ and $\lng
	a,b\rng=\lng a,c\rng=0$. The formula \[0=\lng
	u,v\rng=\cos\theta\cos\varphi+\sin\theta\sin\varphi\lng b,c\rng\] implies
	$\cot\theta\cot\varphi=-\lng b,c\rng\in [-1,1].$ Hence
	$\cot^2\theta\cot^2\varphi\le 1$, which yields
	\[\lng a,u\rng^2+\lng a,v\rng^2=\cos^2\theta+\cos^2\varphi=2-\sin^2\theta-\sin^2\varphi
	=2-\f1{1+\cot^2\theta}-\f1{1+\cot^2\varphi}\le 1\]
\end{proof}
%
% ATTEMPT OF A NECESSARY AND SUFFICIENT CONDITION FOR THE DIAGONAL CASE

%Denote by $\p(M)$ the boundary of a set $M\subseteq\R^n$.

Due to Lemma \ref{lem:lam}, the following lemma is essentially not less general than Theorem \ref{th:bipos}. We present 
it here primarily because proving it is technically more convenient than directly proving Theorem \ref{th:bipos}.

\begin{lemma}\label{lem:sq}
	Let $K=\R^n_+$, $d\in\R^n_+$, $\tau$ be a permuation of $[n]$ 
	such that 
	\begin{equation}\label{eq:ch}
		0=d_{\tau(1)}<d_{\tau(2)}=\dots=d_{\tau(n)} 
	\end{equation}
	and $A=\diag(d)$. If 
	$b_{\tau(1)}\le 2d_{\tau(2)}$ and \[b_i\le -2d_{\tau(2)}\sqrt{6\sqrt3-9},\] 
	for all $i\in [n]\setminus\{\tau(1)\}$, then $f$ is spherically convex. 
\end{lemma}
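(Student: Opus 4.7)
The plan is to verify the characterization \eqref{eq:foc}, i.e., to show that
$\lng Au,u\rng - \lng Av,v\rng \ge \f{1}{2}\lng b,v\rng$
for every $u\in\Sp$ and every $v\in\Sp\cap\R^n_+$ with $\lng u,v\rng=0$. Because the permutation $\tau$ induces an isometry of $\Sp$ that maps $\R^n_+$ onto itself, I would first relabel the coordinates so that $\tau$ is the identity; thus $d_1=0$ and $d_2=\cdots=d_n=d\ge 0$. When $d=0$ the hypotheses force $b_i\le 0$ for every $i$, so the target inequality becomes $0\ge\f{1}{2}\lng b,v\rng$, which holds on $\Sp\cap\R^n_+$. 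Henceforth I assume $d>0$.

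Next I would simplify both sides. Using $\|u\|=\|v\|=1$,
$\lng Au,u\rng-\lng Av,v\rng = d(1-u_1^2)-d(1-v_1^2) = d(v_1^2-u_1^2)$.
Applying Lemma \ref{lem:i} with $a=e^1$ yields $u_1^2+v_1^2\le 1$, so this quantity is at least $d(2v_1^2-1)$; crucially, $u$ has been eliminated. For the right-hand side, set $c:=\sqrt{6\sqrt{3}-9}$. The hypotheses $b_1\le 2d$ and $b_i\le -2dc$ for $i\ge 2$, combined with $v\ge 0$ componentwise, give
$\f{1}{2}\lng b,v\rng \le d v_1 - dc\sum_{i=2}^n v_i \le d v_1 - dc\sqrt{1-v_1^2}$,
where the last step uses $\sum_{i\ge 2} v_i \ge \sqrt{\sum_{i\ge 2} v_i^2} = \sqrt{1-v_1^2}$, valid for nonnegative entries.

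Combining the two bounds reduces the claim to the one-variable inequality $c\sqrt{1-t^2}\ge (1-t)(1+2t)$ for all $t\in[0,1]$ (with $t$ playing the role of $v_1$). Both sides are nonnegative on $[0,1]$, so squaring and cancelling the common factor $1-t\ge 0$ yields the equivalent form $c^2 \ge g(t)$, where $g(t):=(1-t)(1+2t)^2/(1+t)$.

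The hard part, which carries the real content of the proof, is to verify that $\max_{t\in[0,1]} g(t) = 6\sqrt{3}-9 = c^2$. I would compute $g'(t)$; after extracting the positive factor $1+2t$, the critical equation simplifies to $2t^2+2t-1=0$, whose unique root in $[0,1]$ is $t^*=(\sqrt{3}-1)/2$. Then $1+2t^*=\sqrt{3}$, and a short rationalization gives $g(t^*)=3(3-\sqrt{3})/(1+\sqrt{3})=6\sqrt{3}-9$. Since $g(0)=1$ and $g(1)=0$, this is the global maximum, matching $c^2$ exactly. The emergence of the precise value $c^2=6\sqrt{3}-9$ explains why this peculiar constant appears in the hypothesis: it is the smallest value for which the reduction via Lemma \ref{lem:i} and the $\ell^1\ge\ell^2$ inequality produces a valid bound.
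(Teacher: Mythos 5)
Your proof is correct and follows essentially the same route as the paper's: the same application of Lemma \ref{lem:i} with $a=e^{\tau(1)}$ to eliminate $u$, the same $\ell^1\ge\ell^2$ bound $\sum_{i\ge 2}v_i\ge\sqrt{1-v_1^2}$, and the same reduction to a one-variable extremal problem with critical point $t^*=(\sqrt3-1)/2$. The only cosmetic difference is that you square the resulting inequality and maximize $g(t)=(1-t)(1+2t)^2/(1+t)$, whereas the paper maximizes $\psi(t)=(1+t-2t^2)/\sqrt{1-t^2}$ directly (note $g=\psi^2$ on $[0,1)$), and your bookkeeping avoids the paper's auxiliary constant $\alpha$ and its case split on the sign of $b_{\tau(1)}$.
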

\begin{proof}
	%Suppose that $f$ is spherically convex. Let any $i\in [n]\setminus\{\tau(1)\}$. Then, 
	%\eqref{eq:foc} with $u=e^{\tau(1)}$ and $v=e^i$ implies $b_i\le -2d_i$. Furthermore, 
	%\eqref{eq:foc} with $u=e^{\tau(2)}$ and $v=e^{\tau(1)}$ implies $b_{\tau(1)}\le 2d_{\tau(2)}$. 
	%\medskip
	%
	%Conversely, suppose 
	Suppose that $b_{\tau(1)}\le 2d_{\tau(2)}$ and  $b_i\le -2d_{\tau(2)}\sqrt{6\sqrt3-9}$, which is 	 equivalent to $b_i\le-2d_i-\alpha$, 
	where \[\alpha:=2\lf(-1+\sqrt{6\sqrt3-9}\rg)d_{\tau(2)}\approx (0.36)d_{\tau(2)}>0,\] for all
	$i\in [n]\setminus\{\tau(1)\}$. If $b_{\tau(1)}\le 0$, then the spherical convexity of 
	$f$ follows form Corollary \ref{cor:suffd}. Suppose that $b_{\tau(1)}>0$. Then, $v\in K$,
	$b_{\tau(1)}\le 2d_{\tau(2)}$ and $b_i\le -2d_i-\alpha$, for all 
	$i\in [n]\setminus\{\tau(1)\}$ implies $b_{\tau(1)}v_{\tau(1)}\le 2d_{\tau(2)}v_{\tau(1)}$, 
	$\lng b,v\rng-b_{\tau(1)}v_{\tau(1)}\le -2\lng d,v\rng-\alpha\lng 1,v\rng+\alpha v_{\tau(1)}$. 
	By summing up the last two inequalities, we obtain
	\begin{equation}\label{eq:c1}
		\lng b,v\rng=b_{\tau(1)}v_{\tau(1)}+\lng b,v\rng-b_{\tau(1)}v_{\tau(1)}
		\le 2d_{\tau(2)}v_{\tau(1)}-2\lng d,v\rng-\alpha\lng\vo,v\rng+\alpha v_{\tau(1)}.
	\end{equation}
	We also have
	\begin{eqnarray}\label{eq:c2}
		2\lng d,v\rng+\alpha\lng\vo,v\rng-\alpha v_{\tau(1)}
		\ge\lf(2d_{\tau(2)}+\alpha\rg)\lf(\lng\vo,v\rng-v_{\tau(1)}\rg)\nonumber\\
		\ge\lf(2d_{\tau(2)}+\alpha\rg)\sqrt{\lng\vo,v^2\rng-v_{\tau(1)}^2}
		=\lf(2d_{\tau(2)}+\alpha\rg)\sqrt{\|v\|^2-v_{\tau(1)}^2}
		=\lf(2d_{\tau(2)}+\alpha\rg)\sqrt{1-v_{\tau(1)}^2}
	\end{eqnarray}
	because $v_i\ge 0$ for all $i\in [n]$. By combining \eqref{eq:c1} and \eqref{eq:c2} and 
	using the equations \eqref{eq:ch}, $d_{\tau(1)}=0$, $1=\|u\|^2=\lng 1,u^2\rng$, 
	$1=\|v\|^2=\lng 1,v^2\rng$, we get 
	\begin{eqnarray}
		2\lng d,u^2\rng-2\lng d,v^2\rng-\lng b,v\rng
		\ge 2\lng d,u^2\rng-2\lng d,v^2\rng-2d_{\tau(2)}v_{\tau(1)}+2\lng d,v\rng
		+\alpha\lng\vo,v\rng-\alpha v_{\tau(1)}\nonumber\\
		\ge 2\lng d,u^2\rng-2\lng d,v^2\rng-2d_{\tau(2)}v_{\tau(1)}
		+\lf(2d_{\tau(2)}+\alpha\rg)\sqrt{1-v_{\tau(1)}^2}\nonumber\\
		=2d_{\tau(2)}\lng\vo,u^2-u^2_{\tau(1)}e^{\tau(1)}\rng
		-2d_{\tau(2)}\lng\vo,v^2-v^2_{\tau(1)}e^{\tau(1)}\rng
		-2d_{\tau(2)}v_{\tau(1)}+\lf(2d_{\tau(2)}+\alpha\rg)\sqrt{1-v_{\tau(1)}^2}\nonumber\\
		=2d_{\tau(2)}\lng\vo,u^2\rng-2d_{\tau(2)}u^2_{\tau(1)}
		-\lf(2d_{\tau(2)}\lng\vo,v^2\rng-2d_{\tau(2)}v^2_{\tau(1)}\rg)
		-2d_{\tau(2)}v_{\tau(1)}\nonumber\\
		+\lf(2d_{\tau(2)}+\alpha\rg)\sqrt{1-v_{\tau(1)}^2}\nonumber\\
		=2d_{\tau(2)}\|u\|^2-2d_{\tau(2)}u^2_{\tau(1)}
		-\lf(2d_{\tau(2)}\|v\|^2-2d_{\tau(2)}v^2_{\tau(1)}\rg)
		-2d_{\tau(2)}v_{\tau(1)}\nonumber\\
		+\lf(2d_{\tau(2)}+\alpha\rg)\sqrt{1-v_{\tau(1)}^2}\nonumber\\
		=2d_{\tau(2)}-2d_{\tau(2)}u^2_{\tau(1)}
		-\lf(2d_{\tau(2)}-2d_{\tau(2)}v^2_{\tau(1)}\rg)
		-2d_{\tau(2)}v_{\tau(1)}+\lf(2d_{\tau(2)}+\alpha\rg)\sqrt{1-v_{\tau(1)}^2}\nonumber\\
		=2d_{\tau(2)}\lf(v^2_{\tau(1)}-u^2_{\tau(1)}-v_{\tau(1)}\rg)
		+\lf(2d_{\tau(2)}+\alpha\rg)\sqrt{1-v_{\tau(1)}^2}\label{eq:lll}
	\end{eqnarray}
	From Lemma \ref{lem:i} with $a=e^{\tau(1)}$ we have $-u_{\tau(1)}^2\ge v_{\tau(1)}^2-1$, which 
	combined with \eqref{eq:lll} imply
	\begin{eqnarray}
		2\lng d,u^2\rng-2\lng d,v^2\rng-\lng b,v\rng
		\ge 2d_{\tau(2)}\lf(2v^2_{\tau(1)}-v_{\tau(1)}-1\rg)
		+\lf(2d_{\tau(2)}+\alpha\rg)\sqrt{1-v_{\tau(1)}^2}\nonumber\\
		=2d_{\tau(2)}\lf(2x^2-x-1+\beta\sqrt{1-x^2}\rg)\ge 0,\label{eq:li}
	\end{eqnarray}
	where $x:=v_{\tau(1)}$ and $\beta:=(2d_{\tau(2)}+\alpha)/2d_{\tau(2)}=\sqrt{6\sqrt3-9}$, and
	the last inequality in \eqref{eq:li} above can be checked by showing that $\beta$ is the maximal 
	value of the function $\psi:\R\setminus\{-1,1\}\to\R$ defined by 
	\[\psi(x)=\f{-2x^2+x+1}{\sqrt{1-x^2}}\] in the
	interval $[0,1)$. Indeed, it can be easily verified that $\psi$ is concave and its unique 
	stationary point is $x^*=(\sqrt3-1)/2\in [0,1)$ (which is its unique global maximiser) and 
	$\psi(x^*)=\beta$. Hence $\beta=\max_{x\in [0,1)}\psi(x)$ and the last inequality in 
	\eqref{eq:li} readily follows.
\end{proof}

%\pagebreak
 
The next theorem demonstrates that there exist spherically convex functions $f$ such that one component of $b$ is positive.

\begin{theorem}\label{th:bipos}
	Let $K=\R^n_+$, $\tau$ be a permuation of $[n]$ 
	such that \[d_{\tau(1)}<d_{\tau(2)}=\dots=d_{\tau(n)}\] and $A=\diag(d)$. If 
	$b_{\tau(1)}\le 2\lf(d_{\tau(2)}-d_{\tau(1)}\rg)$ and 
	\[b_i\le -2\lf(d_{\tau(2)}-d_{\tau(1)}\rg)\sqrt{6\sqrt3-9},\] 
	for all $i\in [n]\setminus\{\tau(1)\}$, then $f$ is spherically convex. 
\end{theorem}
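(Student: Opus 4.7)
The plan is to deduce Theorem \ref{th:bipos} directly from Lemma \ref{lem:sq} by invoking the shift invariance supplied by Lemma \ref{lem:lam}. I would set $\lambda := d_{\tau(1)}$ and define the shifted vector $d' := d - d_{\tau(1)}\vo$, so that $A - \lambda I = \diag(d')$. Because $d_{\tau(1)}$ is the strict minimum of $d$, the vector $d'$ lies in $\R^n_+$ and satisfies
\[
0 = d'_{\tau(1)} < d_{\tau(2)}-d_{\tau(1)} = d'_{\tau(2)} = \dots = d'_{\tau(n)},
\]
which is precisely the chain condition \eqref{eq:ch} required by Lemma \ref{lem:sq}.

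Next I would check that the hypotheses on $b$ translate verbatim. The assumption $b_{\tau(1)} \le 2(d_{\tau(2)}-d_{\tau(1)})$ becomes $b_{\tau(1)} \le 2 d'_{\tau(2)}$, and for $i \in [n] \setminus \{\tau(1)\}$ the assumption $b_i \le -2(d_{\tau(2)}-d_{\tau(1)})\sqrt{6\sqrt3-9}$ becomes $b_i \le -2 d'_{\tau(2)}\sqrt{6\sqrt3-9}$. These are exactly the hypotheses of Lemma \ref{lem:sq} applied to $A - \lambda I$ and $b$. Lemma \ref{lem:sq} therefore yields that $f_{A-\lambda I, b, c}$ is spherically convex, and Lemma \ref{lem:lam} immediately upgrades this to the spherical convexity of $f = f_{A,b,c}$, which is the desired conclusion.

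There is essentially no obstacle here: the argument is a one-line reduction that the authors themselves flagged in the sentence preceding Lemma \ref{lem:sq}, where they note that this lemma is ``essentially not less general than Theorem \ref{th:bipos}'' precisely because of Lemma \ref{lem:lam}. The only minor point to verify is that the strict inequality $d_{\tau(1)} < d_{\tau(2)}$ is exactly what guarantees the strict inequality $0 < d'_{\tau(2)}$ needed in \eqref{eq:ch}, so the normalization step is harmless. All the genuine technical work—the careful splitting of $\langle b,v\rangle$, the application of Lemma \ref{lem:i}, and the optimization of $\psi(x) = (-2x^2 + x + 1)/\sqrt{1-x^2}$ to extract the constant $\sqrt{6\sqrt3-9}$—was already carried out in the proof of Lemma \ref{lem:sq}.
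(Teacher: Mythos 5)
Your proposal is correct and is exactly the paper's own argument: the paper's proof consists of the single observation that, by Lemma \ref{lem:lam}, $f$ is spherically convex if and only if $f_{A-d_{\tau(1)}I,b,c}$ is, after which Lemma \ref{lem:sq} applies. Your version merely spells out the (harmless) verification that the shifted vector $d'=d-d_{\tau(1)}\vo$ satisfies \eqref{eq:ch} and that the hypotheses on $b$ carry over verbatim.
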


\begin{proof}
From Lemma \ref{lem:lam} $f$ is spherically convex if and only if $f_{A-d_{\tau(1)}I,b,c}$ is spherically
convex. Hence, the result readily follows. 
\end{proof}

The next proposition can be used as a further negative certificate for spherically
convex functions. Indeed, if $f$ does not satisfy the condition of the
Proposition for some indices $i\ne j$, then $f$ is not spherically convex.
Solving the corresponding minimization problem is just a matter of simple
numerical analysis. 

\begin{proposition}
	Let $K=\mathbb R^n_+$. If $f$ is sperically convex, then 
	\[\min\lf\{\lf[(a_{ii}-a_{jj})\cos(2\theta)-2a_{ij}\sin(2\theta)
	-\sin(\theta)b_i-\cos(\theta)b_j\rg]\,:\,\theta\in
	\lf[0,\f{\pi}2\rg]\rg\}\ge 0,\] for all $i,j\in [n]$ with $i\ne j$.
\end{proposition}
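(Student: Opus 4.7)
The plan is to specialise the first-order characterisation \eqref{eq:foc} from Proposition 2 to a one-parameter family of admissible pairs $(u,v)$ supported in the coordinate plane spanned by $e^i$ and $e^j$. Since the target inequality is a trigonometric polynomial in $\theta$ whose quadratic-in-$A$ coefficients involve the double angle ($\cos(2\theta)$, $\sin(2\theta)$) while the linear-in-$b$ coefficients involve the single angle ($\sin\theta$, $\cos\theta$), the natural mechanism is that the double-angle terms emerge from evaluating a quadratic form on a unit vector rotated by $\theta$, while the single-angle terms come from pairing $b$ with the same rotated vector.

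Concretely, for fixed $i \ne j$ and $\theta \in [0,\pi/2]$, I would take
\[ v = \sin(\theta)\, e^i + \cos(\theta)\, e^j, \qquad u = \cos(\theta)\, e^i - \sin(\theta)\, e^j. \]
Both vectors have unit norm and $\lng u, v\rng = \sin\theta\cos\theta - \cos\theta\sin\theta = 0$; moreover, $v \in \Sp \cap \R^n_+ = \Sp \cap K$ since both of its nonzero coordinates are nonnegative throughout $[0,\pi/2]$. Hence $(u,v)$ is an admissible pair in \eqref{eq:foc}. The fact that $\theta$ is restricted exactly to $[0,\pi/2]$ is forced by this positivity requirement on $v$.

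The remainder of the argument is a direct computation. Expanding $\lng Au,u\rng$ and $\lng Av,v\rng$ using $a_{ji}=a_{ij}$ and applying the double-angle identities $\cos^2\theta-\sin^2\theta=\cos(2\theta)$ and $2\sin\theta\cos\theta=\sin(2\theta)$ gives
\[ \lng Au,u\rng - \lng Av,v\rng = (a_{ii}-a_{jj})\cos(2\theta) - 2a_{ij}\sin(2\theta), \]
while $\lng b,v\rng = \sin(\theta) b_i + \cos(\theta) b_j$. Substituting into \eqref{eq:foc} yields, after clearing factors, the nonnegativity of the trigonometric expression appearing inside the minimum in the statement for every $\theta \in [0,\pi/2]$, which is exactly the claim.

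There is essentially no substantive obstacle; the proof is a one-shot application of \eqref{eq:foc} with the ``obvious'' orthonormal pair in the $(e^i, e^j)$-plane, and the only care required is (i) verifying the positivity $v \in K$ on the whole parameter interval, and (ii) matching the numerical constants produced by the double-angle identities and the $1/2$ factor in \eqref{eq:foc} against those in the stated inequality; a minor rescaling of the parameterisation (e.g.\ replacing $\theta$ with $\theta/2$, or interchanging the roles of $e^i$ and $e^j$) can be used to align them exactly if needed.
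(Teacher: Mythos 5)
Your proposal is exactly the paper's proof: the same orthonormal pair $u=\cos(\theta)e^i-\sin(\theta)e^j$, $v=\sin(\theta)e^i+\cos(\theta)e^j\in\Sp\cap K$ substituted into \eqref{eq:foc}, followed by the double-angle identities. One remark on your point (ii): the computation actually yields $(a_{ii}-a_{jj})\cos(2\theta)-2a_{ij}\sin(2\theta)\ge\f12\lf(\sin(\theta)b_i+\cos(\theta)b_j\rg)$, so the printed statement is off by a factor of $2$ between the $A$-terms and the $b$-terms; this appears to be a typo in the proposition itself (shared by the paper's own one-line proof), and contrary to your suggestion it cannot be absorbed by reparameterising $\theta$, since that changes the arguments of both groups of terms rather than their relative coefficient.
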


\begin{proof}
	Let any $\theta\in [0,\pi/2]$ and any $i,j\in [n]$ with $i\ne j$. Then,
	the result follows from \eqref{eq:foc} with
	\(u=\cos{\theta}e^i-\sin{\theta}e^j\in\Sp\) and 
	\(v=\sin(\theta)e^i+\cos(\theta)e^j\in\Sp\cap K\) after some trigonometric 
	manipulations.
\end{proof}

The next two lemmas are building blocks of Theorem \ref{th:cd}. Although in the
case $n>2$ Lemma \ref{lem:d} follows from Theorem \ref{th:iffdiag}, we give
an explicit proof below which works for all $n$. 

\begin{lemma}\label{lem:d}
	Let $K=\R^n_+$, $i\in [n]$ and $A=\pm e^i(e^i)\tp$. Then, $f$ is spherically convex if and only 
	if 
	$b_j\le -2\delta_{ij}$, for all $j\in [n]$.
\end{lemma}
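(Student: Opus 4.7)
The plan is to identify $A = \pm e^i(e^i)^\top$ with $\diag(\pm e^i)$, at which point Theorem \ref{th:iffdiag} applies directly in the $+$ case: here $d = e^i$ has $\min_k d_k = 0$ attained at all $n-1\ge 2$ indices $j \ne i$ (the standing assumption $n\ge 3$ supplies the two distinct minimisers), so the theorem yields the equivalence $f$ spherically convex $\iff$ $b_j \le 2[\min_k d_k - d_j] = -2\delta_{ij}$, matching the claim. The $-$ case is not immediately covered by Theorem \ref{th:iffdiag} because $d = -e^i$ has a unique minimum, but I expect both signs to be treated uniformly by the direct argument below.

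The self-contained proof valid for all $n$ that the sentence preceding the lemma promises proceeds from the first-order characterization \eqref{eq:foc}. For necessity (in the $+$ case): taking $v = e^i \in \Sp\cap K$ and $u = e^k$ for any $k \ne i$ (both unit vectors, $\langle u,v\rangle = 0$) forces $0 - 1 \ge \tfrac12 b_i$, i.e.\ $b_i \le -2$; and for each $j \ne i$, taking $v = e^j$ and $u = e^k$ with $k \in [n]\setminus\{i,j\}$ (possible since $n\ge 3$) forces $0 - 0 \ge \tfrac12 b_j$, i.e.\ $b_j \le 0$.

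For sufficiency (in the $+$ case), assume $b_i \le -2$ and $b_j \le 0$ for $j\ne i$. Since $v \in \Sp\cap\R^n_+$ has non-negative coordinates, $\langle b,v\rangle \le b_i v_i \le -2 v_i$, which upon substitution into \eqref{eq:foc} reduces the required inequality to $u_i^2 + v_i(1-v_i) \ge 0$. This is immediate because $u_i^2 \ge 0$ and $v_i \in [0,1]$ force $v_i(1-v_i) \ge 0$.

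The main obstacle I foresee is delivering the case $n = 2$, needed to justify the "valid for all $n$" scope of the lemma, where the third basis vector $e^k$ used in the necessity argument for $b_j \le 0$ is unavailable. I would replace the basis-vector trick by the parametric pair $(u,v) = ((-\sin\theta) e^i + (\cos\theta) e^j,\, (\cos\theta) e^i + (\sin\theta) e^j)$ on the great circle through $e^i$ and $e^j$, and extract the bound $b_j \le 0$ from \eqref{eq:foc} via a derivative test at $\theta = 0$; a similar continuous substitution should also let me sidestep the lack of applicability of Theorem \ref{th:iffdiag} in the $-$ case, since Lemma \ref{lem:i} gives the sharp companion bound $u_i^2 \le 1 - v_i^2$ that must replace $u_i^2 \ge 0$ when the sign of the leading term flips.
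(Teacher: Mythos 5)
Your treatment of the case $A=e^i(e^i)\tp$ is complete and is essentially the paper's own argument: the paper also obtains necessity by feeding standard basis vectors into \eqref{eq:foc} and sufficiency from the chain $\f12\lng b,v\rng\le\f12 b_iv_i\le -v_i\le -v_i^2\le u_i^2-v_i^2$, which is your $u_i^2+v_i(1-v_i)\ge0$. Your observation that this case also drops out of Theorem \ref{th:iffdiag} is a nice alternative route (and your worry about $n=2$ is moot: the paper fixes $n\ge3$ in Section 2, so you never need the great-circle substitute for the third basis vector).

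The genuine gap is the case $A=-e^i(e^i)\tp$, which you leave as a sketch, and the sketch cannot be completed. Your plan is to replace $u_i^2\ge0$ by the bound $u_i^2\le 1-v_i^2$ from Lemma \ref{lem:i}; but under the stated hypothesis ($b_i\le-2$, $b_j\le0$ for $j\ne i$) the inequality to be proved is $v_i^2-u_i^2\ge\f12\lng b,v\rng$, and the best this bound yields is $v_i^2-u_i^2\ge2v_i^2-1\ge -v_i$, which requires $v_i\ge1/2$ and simply fails otherwise. Indeed, taking $u=e^i$ and $v=e^j$ with $j\ne i$ in \eqref{eq:foc} gives $-1\ge\f12 b_j$, i.e.\ the \emph{necessary} condition $b_j\le-2$ for all $j\ne i$ --- strictly stronger than the $b_j\le0$ the lemma asserts --- so no sufficiency argument from $b_j\le0$ can succeed, and your "uniform treatment of both signs" is unachievable. (For what it is worth, the paper's own proof of this half stumbles at the same place: the claim that \eqref{eq:foc} "trivially holds whenever $u_i^2-v_i^2\ge0$" is exactly the regime where it does not, as the pair $u=e^i$, $v=e^j$ shows.) To make your write-up honest you would either have to restrict the lemma to the $+$ sign or work out the correct condition for the $-$ sign, e.g.\ via Lemma \ref{lem:lam} applied to $I-e^i(e^i)\tp=\diag(\vo-e^i)$, whose minimal diagonal entry sits at $i$ alone, so Theorem \ref{th:iffdiag} does not rescue it either.
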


\begin{proof} 
	%$\,$
	%
	Suppose that $f$ is spherically convex. Let any $j,k\in [n]$ such that $j\ne k$ and 
	$k\ne i$. If $A=e^i(e^i)\tp$, then the inequality follows by using  inequality \eqref{eq:foc} 
	with $u=e^k$, $v=e^j$. If, $A=-e^i(e^i)\tp$, then the inequality follows by using 
	inequality \eqref{eq:foc} with $u=e^j$, $v=e^k$.  
	\medskip
		
	Conversely, suppose that $b_j\le -2\delta_{ij}$, for all $j\in [n]$.  
	Let any $u\in\Sp$ and any $v\in\Sp\cap K$ with $\lng u,v\rng=0$. If $A=e^i(e^i)^\top$, then we 
	have
	\[\f12\lng b,v\rng\le\f12b_iv_i\le -v_i\le -v_i^2\le u_i^2-v_i^2=\lng Au,u\rng-\lng Av,v\rng,\] 		 which implies inequality
	\eqref{eq:foc}. Therefore, $f$ is spherically convex. If $A=-e^i(e^i)^\top$, then inequality 
	\eqref{eq:foc} trivially holds whenever $u_i^2-v_i^2\ge 0$. Therefore, to show that 
	\eqref{eq:foc} holds, we can assume without loss of generality that $v_i^2-u_i^2>0$. Hence, 
	we have \[\f12\lng b,v\rng\le\f12b_iv_i\le -v_i\le v_i^2-u_i^2=\lng Au,u\rng-\lng Av,v\rng,\] 		 which implies inequality
	\eqref{eq:foc}. Thus, $f$ is spherically convex.
\end{proof}

Although only item (ii) of the next lemma is used in the proof of Theorem \ref{th:cd}, we have
included item (i) as well to make the result more complete.

\begin{lemma}\label{lem:ndp}
	Let $K=\R^n_+$, $i,j\in [n]$, $i\ne j$ and $A=\pm e^i(e^j)\tp\pm e^j(e^i)\tp$. 
	Then, the following statements hold: 
	\begin{enumerate}[(i)]
		\item If $f$ is spherically convex, then $b_k\le 2(\delta_{ik}+\delta_{jk}-1)$ for all 
			$k\in [n]$.
		\item If $b_k\le -3\mp 1$, for all $k\in [n]$, then $f$ is 
			spherically convex.
	\end{enumerate}
\end{lemma}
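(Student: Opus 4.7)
The cornerstone will be the first-order characterization \eqref{eq:foc} applied to the matrix $A = \sigma\bigl(e^i(e^j)\tp + e^j(e^i)\tp\bigr)$ with $\sigma \in \{+1,-1\}$ tracking the common sign, noting that symmetry forces the two $\pm$ signs in the statement to coincide. For such $A$ one has the single identity $\lng Ax,x\rng = 2\sigma x_ix_j$ for every $x\in\R^n$, which will be the only fact about $A$ that I use.

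For item (i), the strategy is to exhibit, for each $k\in[n]$, a feasible pair $(u,v)\in\Sp\times(\Sp\cap\R^n_+)$ with $\lng u,v\rng=0$ that makes \eqref{eq:foc} collapse into the claimed upper bound on $b_k$. For $k\notin\{i,j\}$ I would take $v=e^k$ (so that $\lng Av,v\rng = 0$ and $\lng b,v\rng = b_k$) and $u = \f{1}{\sqrt{2}}(e^i-\sigma e^j)$, which is a unit vector orthogonal to $e^k$ and satisfies $\lng Au,u\rng = -1$; substituting into \eqref{eq:foc} yields $b_k\le -2$. For $k\in\{i,j\}$ I would take $v=e^k$ and $u=e^\ell$ with $\{\ell\}=\{i,j\}\setminus\{k\}$, so that both quadratic terms vanish and \eqref{eq:foc} reduces to $b_k\le 0$. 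The two cases together give $b_k\le 2(\delta_{ik}+\delta_{jk}-1)$.

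For item (ii), the hypothesis $b_k \le -3\mp 1$ unpacks as $b_k \le -4$ when $\sigma=+1$ and $b_k\le-2$ when $\sigma=-1$. The plan is to verify \eqref{eq:foc} directly from elementary scalar inequalities. The key bounds are
\[
-\f{1}{2} \le u_iu_j \le \f{1}{2},\qquad 0 \le v_iv_j \le \f{1}{2},
\]
the first coming from $(u_i\pm u_j)^2\ge 0$ and $\|u\|=1$, the second additionally from $v\in\R^n_+$. One also uses that for $v\in\Sp\cap\R^n_+$ one has $\lng\vo,v\rng = \sum_k v_k \ge \sqrt{\sum_k v_k^2}=1$. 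Combining these, $\lng Au,u\rng - \lng Av,v\rng = 2\sigma(u_iu_j - v_iv_j)$ is bounded below by $-2$ when $\sigma=+1$ and by $-1$ when $\sigma=-1$, while $\f{1}{2}\lng b,v\rng \le \f{1}{2}(-3\mp 1)\lng\vo,v\rng$ is bounded above by $-2$ or $-1$ respectively. In either case \eqref{eq:foc} holds, so $f$ is spherically convex.

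The main point of care will be bookkeeping the two sign choices and observing that the nonnegativity $v\in\R^n_+$ is exactly what shifts the admissible threshold by $2$ between the cases (it forces $v_iv_j\ge 0$ on one side and $\lng\vo,v\rng\ge 1$ on the other). No delicate one-variable optimization of the type used in Lemma \ref{lem:sq} is needed here; each elementary estimate above is essentially tight, and \eqref{eq:foc} follows term by term.
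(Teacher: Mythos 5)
Your proposal is correct and follows essentially the same route as the paper: part (i) uses \eqref{eq:foc} with exactly the test pairs $u=\tfrac{1}{\sqrt2}(e^i\mp e^j)$, $v=e^k$ (resp. $u=e^\ell$, $v=e^k$), and part (ii) rests on the same elementary estimates $|u_iu_j|\le\tfrac12$, $v_iv_j\ge 0$, and $\sum_k v_k\ge 1$ for $v\in\Sp\cap\R^n_+$, merely organized as separate lower/upper bounds on the two sides of \eqref{eq:foc} rather than as one chained inequality. Your explicit remark that symmetry of $A$ forces the two $\pm$ signs to coincide is a welcome clarification of the statement.
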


\begin{proof}
	\begin{enumerate}[(i)]
		\item  Let $f$ be spherically convex. If $k\notin\{i,j\}$, then the statement follows 
			by using inequality 
			\eqref{eq:foc} with $u=(1/\sqrt2)e^i\mp (1/\sqrt2)e^j$ and $v=e^k$. If 
			$k\in\{i,j\}$, then the inequality follows by using inequality \eqref{eq:foc} 
			with $u=e^\ell$ and $v=e^k$, where $\ell\in\{i,j\}\setminus\{k\}$.	
		\item Let $b_k\le -3\mp 1$, for all $k\in [n]$. First, suppose that 
			$A=e^i(e^j)\tp+e^j(e^i)\tp$. Let any $u\in\Sp$ and any 
			$v\in\Sp\cap K$ with $\lng u,v\rng=0$. We have
			\begin{eqnarray*}
				\lng Au,u\rng-\lng Av,v\rng=2u_iu_j-2v_iv_j
				\ge-(u_i^2+u_j^2)-(v_i^2+v_j^2)\ge
				-\sum_{k=1}^n u_k^2-\sum_{k=1}^n v_k^2\\
				=-2\sum_{k=1}^n v_k^2\ge -2\sum_{k=1}^n v_k=
				\f12\lf(-4\sum_{k=1}^n v_k\rg)\ge\f12\lf(\sum_{k=1}^n b_kv_k\rg)
				=\f12\lng b,v\rng,
			\end{eqnarray*}
			which implies inequality \eqref{eq:foc}. Therefore, $f$ is spherically 
			convex. Next, suppose that $A=-e^i(e^j)\tp+e^j(e^i)\tp$. Let any $u\in\Sp$ 
			and any $v\in\Sp\cap K$ with $\lng u,v\rng=0$. We have 
			\begin{eqnarray*}
				\lng Au,u\rng-\lng Av,v\rng=2v_iv_j-2u_iu_j
				\ge-2u_iu_j\ge-(u_i^2+u_j^2)\ge
				-\sum_{k=1}^n u_k^2
				\\=-\sum_{k=1}^n v_k^2\ge -\sum_{k=1}^n v_k=
				\f12\lf(-2\sum_{k=1}^n v_k\rg)
				\ge\f12\lf(\sum_{k=1}^n b_kv_k\rg)=\f12\lng b,v\rng,
			\end{eqnarray*}
			which implies inequality \eqref{eq:foc}. Therefore, $f$ is spherically convex.
	\end{enumerate}
\end{proof}

The following theorem and corollary provide additional large classes of spherically convex 
functions.

\begin{theorem}\label{th:cd}
	Let $K=\R^n_+$, \[b=\sum_{i=1}^na_{ii}^+b^{i}_++\sum_{i=1}^na_{ii}^-b^{i}_-
	+\sum_{\substack{i,j=1 \\ i \neq j}}^n a_{ij}^+b_+^{ij}
	+\sum_{\substack{i,j=1 \\ i \neq j}}^n a_{ij}^-b_-^{ij},\] where 
	$b^i_+,b^i_-,b^{ij}_+,b^{ij}_-\in\R^n$ with 
	${b^i_+}_k,{b^i_-}_k\le-2\delta_{ik},$ ${b^{ij}_-}_k\le -2$, and ${b^{ij}_+}_k\le -4$, for all 
	$i,j,k\in [n]$. Then, $f$ is spherically convex. 
\end{theorem}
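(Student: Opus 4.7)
The plan is to exhibit $f=f_{A,b,c}$ as a nonnegative linear combination of ``elementary'' quadratic functions that are already known to be spherically convex via Lemmas \ref{lem:d} and \ref{lem:ndp}, plus an additive constant. Since spherical convexity is preserved by finite sums, by multiplication by nonnegative scalars, and by additive constants, this will yield the spherical convexity of $f$. The bulk of the work is writing down the right decomposition so that its quadratic part reconstructs $\lng Ax,x\rng$ and its linear part reconstructs $\lng b,x\rng$, and then checking piece-by-piece that the stated bounds on the vectors $b^i_\pm$ and $b^{ij}_\pm$ meet the hypotheses of the two lemmas.

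Using $a_{ii}=a_{ii}^+-a_{ii}^-$ and $a_{ij}=a_{ij}^+-a_{ij}^-$ in the expansion $\lng Ax,x\rng=\sum_{i}a_{ii}x_i^2+\sum_{i\ne j}a_{ij}x_ix_j$, together with the hypothesized form of $b$, I would rewrite
\begin{align*}
f(x) = \sum_{i=1}^n a_{ii}^+\, g_i^+(x) + \sum_{i=1}^n a_{ii}^-\, g_i^-(x)
+ \sum_{\substack{i,j=1 \\ i\ne j}}^n a_{ij}^+\, g_{ij}^+(x)
+ \sum_{\substack{i,j=1 \\ i\ne j}}^n a_{ij}^-\, g_{ij}^-(x) + c,
\end{align*}
where $g_i^{\pm}(x):=\pm x_i^2+\lng b^i_\pm,x\rng$ and $g_{ij}^{\pm}(x):=\pm x_i x_j+\lng b^{ij}_\pm,x\rng$ for $i\ne j$. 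The diagonal pieces $g_i^{\pm}$ are precisely the functions covered by Lemma \ref{lem:d} with $A=\pm e^i(e^i)\tp$; the hypothesis $(b^i_\pm)_k\le -2\delta_{ik}$ is exactly what that lemma requires, so each $g_i^\pm$ is spherically convex.

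For the off-diagonal pieces, observe that $\pm x_i x_j = \tfrac12\lng\pm(e^i(e^j)\tp+e^j(e^i)\tp)x,x\rng$, so doubling $g_{ij}^{\pm}$ (which does not affect spherical convexity, as positive rescaling preserves convexity along geodesics) yields the non-homogeneous quadratic function with matrix $\pm(e^i(e^j)\tp+e^j(e^i)\tp)$ and linear part $2\, b^{ij}_\pm$. Lemma \ref{lem:ndp}(ii) then requires $2(b^{ij}_+)_k\le -4$ in the ``$+$'' case and $2(b^{ij}_-)_k\le -2$ in the ``$-$'' case, that is, $(b^{ij}_+)_k\le -2$ and $(b^{ij}_-)_k\le -1$; both are strictly weaker than the theorem's hypotheses $(b^{ij}_+)_k\le -4$ and $(b^{ij}_-)_k\le -2$, so each $g_{ij}^\pm$ is spherically convex. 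Since all coefficients $a_{ii}^\pm$ and $a_{ij}^\pm$ are nonnegative, $f$ is a nonnegative linear combination of spherically convex functions plus a constant, and hence spherically convex. I do not anticipate any substantive obstacle beyond the bookkeeping needed to verify that the decomposition genuinely reproduces both $\lng Ax,x\rng$ and $\lng b,x\rng$, which is immediate from the definitions of $g_i^\pm$, $g_{ij}^\pm$ and the hypothesized form of $b$.
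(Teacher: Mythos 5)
Your proof is correct and follows essentially the same route as the paper: decompose $f$ into a nonnegative linear combination of elementary quadratic pieces (plus a constant) and invoke Lemma \ref{lem:d} and Lemma \ref{lem:ndp}(ii). Your explicit handling of the factor of $2$ in the off-diagonal terms (rescaling $g_{ij}^{\pm}$ before applying Lemma \ref{lem:ndp}, which shows the stated bounds on $b^{ij}_{\pm}$ are stronger than strictly needed for this decomposition over ordered pairs) is if anything slightly more careful than the paper's own bookkeeping.
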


\begin{proof} 
	Since \[A=\sum_{i=1}^n a_{ii}^+e_ie_i\tp+\sum_{i=1}^n a_{ii}^-\lf(-e_ie_i\tp\rg)+
		\sum_{\substack{i,j=1 \\ i \neq j}}^{n} a_{ij}^+
	\lf(e_ie_j\tp+e_je_i\tp\rg)+\sum_{\substack{i,j=1 \\ i \neq j}}^{n} a_{ij}^-
\lf(-e_ie_j\tp-e_je_i\tp\rg)\] and $f=f_{A,b,c}$, we have
	\[f=\sum_{i=1}^{n} a_{ii}^+f_{e_ie_i\tp
	,b^i_+,c^i_+}+\sum_{i=1}^{n} a_{ii}^-f_{-e_ie_i\tp,b^i_-,c^i_-}
	+\sum_{\substack{i,j=1 \\ i \neq j}}^{n} a_{ij}^+f_{e_ie_j\tp
	+e_je_i\tp,b^{ij}_+,c^{ij}_+}
	+\sum_{\substack{i,j=1 \\ i \neq j}}^{n} a_{ij}^-f_{-e_ie_j\tp-e_je_i\tp,b^{ij}_-,c^{ij}_-},\] 
	where $c^i$, $c^{ij}_+$ and $c^{ij}_-$ are arbitrary real constants with
	\[c=\sum_{i=1}^na_{ii}c^{i}_++\sum_{i=1}^na_{ii}c^{i}_-
	+\sum_{\substack{i,j=1 \\ i \neq j}}^n a_{ij}^+c_+^{ij}
	+\sum_{\substack{i,j=1 \\ i \neq j}}^n a_{ij}^-c_-^{ij}.\]
	Hence, items (ii) of Lemmas \ref{lem:d}-\ref{lem:ndp} %and Lemma \ref{lem:ndp} 
	imply that $f$ is a linear combination of spherically convex functions 
	with nonnegative coefficients. Therefore, $f$ is spherically convex.
\end{proof}

\begin{corollary}%\label{cor:sc}
	Suppose that $K=\R^n_+$ and $A\ne 0$. Then, the following statements hold:
	\begin{enumerate}[(i)]
		\item%\label{mm} 
			Let \[b=\sum_{i=1}^n|a_{ii}|b^{i}
			+\sum_{\substack{i,j=1 \\ i \neq j}}^n |a_{ij}|b^{ij},\] where 
			$b^i,b^{ij}\in\R^n$ with $b^i_k\le -2\delta_{ik}$ and $b^{ij}_k\le -4$, for all 
			$i,j,k\in [n]$. Then, $f$ is spherically convex.
		\item\label{mc} If \[b_k\le-2|a_{kk}|
			-4\sum_{\substack{i,j=1 \\ i \neq j}}^n a_{ij}^+
			-2\sum_{\substack{i,j=1 \\ i \neq j}}^n a_{ij}^-,\]
			for all $k\in [n]$, then $f$ is sperically convex. 
		\item%\label{mmc} 
			If \[b_k\le-2|a_{kk}|
			-4\sum_{\substack{i,j=1 \\ i \neq j}}^n |a_{ij}|,\]
			for all $k\in [n]$, then $f$ is sperically convex.
	\end{enumerate}

\end{corollary}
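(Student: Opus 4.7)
The overall strategy is to derive all three items from Theorem \ref{th:cd} by constructing suitable decompositions of $b$ that satisfy its hypotheses. For item (i), I would use $|a_{ii}|=a_{ii}^++a_{ii}^-$ and $|a_{ij}|=a_{ij}^++a_{ij}^-$ to rewrite the given expression as
\[
b=\sum_{i=1}^n a_{ii}^+b^i+\sum_{i=1}^n a_{ii}^-b^i+\sum_{\substack{i,j=1\\i\ne j}}^n a_{ij}^+b^{ij}+\sum_{\substack{i,j=1\\i\ne j}}^n a_{ij}^-b^{ij},
\]
and set $b^i_+:=b^i_-:=b^i$ and $b^{ij}_+:=b^{ij}_-:=b^{ij}$. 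The hypothesis $b^i_k\le-2\delta_{ik}$ yields $({b^i_\pm})_k\le-2\delta_{ik}$, while $b^{ij}_k\le-4$ yields both $({b^{ij}_+})_k\le-4$ and $({b^{ij}_-})_k\le-4\le-2$, so Theorem \ref{th:cd} applies and $f$ is spherically convex.

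For item (ii), I would build a canonical Theorem \ref{th:cd}-decomposition coordinate-wise by defining
\[
({b^i_+})_k=({b^i_-})_k:=-2\delta_{ik},\qquad({b^{ij}_+})_k:=-4,\qquad({b^{ij}_-})_k:=-2
\]
for all $i,j,k$. Plugging into the Theorem \ref{th:cd} formula gives a vector $\tilde b$ with
\[
\tilde b_k=-2(a_{kk}^++a_{kk}^-)-4\sum_{\substack{i,j=1\\i\ne j}}^n a_{ij}^+-2\sum_{\substack{i,j=1\\i\ne j}}^n a_{ij}^-=-2|a_{kk}|-4\sum_{\substack{i,j=1\\i\ne j}}^n a_{ij}^+-2\sum_{\substack{i,j=1\\i\ne j}}^n a_{ij}^-,
\]
which is exactly the upper bound in (ii). Since the hypothesis only gives $b_k\le\tilde b_k$, I must absorb the nonnegative slacks $\tilde b_k-b_k$ somewhere; this is where $A\ne 0$ enters, guaranteeing that at least one coefficient among $\{a_{ii}^+,a_{ii}^-,a_{ij}^+,a_{ij}^-\}$ is strictly positive. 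Fixing such a coefficient (say $a_{i_0j_0}^+>0$; the other three cases are analogous) and replacing $({b^{i_0j_0}_+})_k:=-4-(\tilde b_k-b_k)/a_{i_0j_0}^+\le-4$ for each $k$ preserves every Theorem \ref{th:cd} sign constraint (the replacement is only decreased further) and produces a decomposition summing to exactly $b$, so Theorem \ref{th:cd} yields spherical convexity.

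For item (iii), I would simply note $-4|a_{ij}|=-4a_{ij}^+-4a_{ij}^-\le-4a_{ij}^+-2a_{ij}^-$, so the hypothesis of (iii) implies that of (ii), and the conclusion follows from (ii).

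The only delicate step is the slack absorption in item (ii): the statement does not postulate a decomposition of $b$, only an inequality on its coordinates, so one must manufacture the decomposition. This is a mild bookkeeping issue rather than a genuine obstacle, resolved by the short case split on which type of entry of $A$ is nonzero.
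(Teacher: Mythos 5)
Your proposal is correct, and all three items are derived, as in the paper, by reduction to Theorem \ref{th:cd}; items (i) and (iii) coincide with the paper's argument essentially verbatim (same choice $b^i_+=b^i_-=b^i$, $b^{ij}_+=b^{ij}_-=b^{ij}$ via $|a|=a^++a^-$, and the same observation that the bound in (iii) implies the bound in (ii)). The only genuine difference is in item (ii): the paper manufactures the required exact decomposition of $b$ by taking every building block to be a suitable scalar rescaling of $b$ itself (normalizing by the total weight $-2\sum_i|a_{ii}|-4\sum_{i\ne j}a_{ij}^+-2\sum_{i\ne j}a_{ij}^-$, which is nonzero precisely because $A\ne 0$), whereas you take the extremal blocks $(b^i_\pm)_k=-2\delta_{ik}$, $(b^{ij}_+)_k=-4$, $(b^{ij}_-)_k=-2$ and then push the componentwise slack $\tilde b_k-b_k\ge 0$ into a single block attached to a strictly positive coefficient of $A$. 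Your version is arguably cleaner: it makes explicit why the hypothesis $b_k\le\tilde b_k$ (an inequality, not an identity) still yields an exact decomposition, and why $A\ne 0$ is needed, points that the paper's rescaling formula leaves rather opaque. Since all the constraints in Theorem \ref{th:cd} are upper bounds, decreasing one block further is harmless, so the absorption step is sound; no gaps.
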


\begin{proof}
	$\,$

	\begin{enumerate}[(i)]
		\item Since $|a_{ij}|=a_{ij}^++a_{ij}^-$, for all $i,j\in [n]$, the result follows from 
			Theorem \ref{th:cd} with $b^i_+=b^i_-=b^i$ and $b^{ij}_+=b^{ij}_-=b^{ij}$.
		\item Since $|a_{ii}|=a_{ii}^++a_{ii}^-$, for all $i\in [n]$, the result follows
			from Theorem \ref{th:cd} with 
			\[b^i_{+k}=b^i_{-k}=-2\delta_{ik}\f{b}{\ds-2\sum_{i=1}^n|a_{ii}|
			-4\sum_{\substack{i,j=1 \\ i \neq j}}^n a_{ij}^+
			-2\delta{ik}\sum_{\substack{i,j=1 \\ i \neq j}}^n a_{ij}^-},\] 
			\[b^{ij}_-=-2\f{b}{\ds-2\sum_{i=1}^n|a_{ii}|
			-4\sum_{\substack{i,j=1 \\ i \neq j}}^n a_{ij}^+
			-2\sum_{\substack{i,j=1 \\ i \neq j}}^n a_{ij}^-}\] 
			and $b^{ij}_+=2b^i_+$, for all $i,j,k\in [n]$.
		\item It follows from $|a_{ij}|=a_{ij}^++a_{ij}^-$, for all $i,j\in [n]$ and item 
			\ref{mc}.
	\end{enumerate}
\end{proof}

\section{Final remarks} In this paper we studied the geodesic convexity of
non-homogeneous quadratic functions defined on a proper spherically convex set
(i.e., a geodesically convex set on the sphere), which we simply called spherical convexity.
We presented various necessary, sufficient
and equivalent conditions for the spherical convexity of such functions. Besides
being useful for positive and negative certificates regarding their spherical
convexity, these conditions also yield intriguing consequences.
\begin{enumerate}
	\item For any symmetric matrix that defines the homogeneous quadratic part of a 
		non-homogeneous quadratic function, there exist infinitely many vectors that 
		define the homogeneous linear term of the function, ensuring that the function
		is spherically convex, making the class of spherically convex
		non-homogeneous quadratic functions much larger than the class of
		(Euclidean) convex non-homogeneous quadratic functions. 
	\item The homogeneous linear term of
		the function has no infuence on the (Euclidean) convexity of the
		function, but it is crucial for its spherical convexity. 
	\item Even more interestingly, the previous consequence sharply contrasts with the 
		fact that the class of spherically convex homogeneous functions on the
		spherically convex set formed by unit vectors of positive
		coordinates is formed by the constant functions, hence it is much
		smaller than its Euclidean counterpart. 
\end{enumerate}
We found specific explicit conditions for the spherical convexity of non-homogeneous 
quadratic functions correspondig to positive matrices, Z-matrices and diagonal matrices.

A challenge for the future is to find explicit conditions for more general
non-homogeneous quadratic functions, defined on more general spherically convex
sets and/or corresponding to more general matrices. It would be particularly
interesting to study the spherical convexity of non-homogeneous quadratic
functions on the spherically convex set defined by the intersection of the
Lorentz cone with the sphere.  

\bibliographystyle{habbrv}
\bibliography{NonHomSphConv}

\end{document}